\newtheorem{lemma}{Lemma}
\newtheorem{proposition}{Proposition}
\newtheorem{theorem}{Theorem}
\newtheorem{remark}{Remark}
\newcommand{\eg}{\textit{e.g.},\xspace}
\newcommand{\ie}{\textit{i.e.},\xspace} 
\newcommand\publishedtext{%
	\footnotesize This is the author's accepted version of the article. The final version published by IEEE is G. O. Ferreira, et al, ``A Joint Optimization Approach for Power-Efficient Heterogeneous OFDMA Radio Access Networks''  IEEE JSAC Special Issue on Advanced Optimization Theory and Algorithms for Next Generation Wireless Communication Networks, vol TBD, pp. TBD, doi: TBD.} 
\newcommand\copyrighttext{%
  \footnotesize \textcopyright 2024 IEEE. Personal use of this material is permitted.
  Permission from IEEE must be obtained for all other uses, in any current or future
  media, including reprinting/republishing this material for advertising or promotional
  purposes, creating new collective works, for resale or redistribution to servers or
  lists, or reuse of any copyrighted component of this work in other works.}
\newcommand\copyrightnotice{%
\begin{tikzpicture}[remember picture,overlay]
\node[anchor=north,yshift=0pt] at (current page.north) {\fbox{\parbox{\dimexpr\textwidth-\fboxsep-\fboxrule\relax}{\publishedtext}}};
\node[anchor=south,yshift=10pt] at (current page.south) {\fbox{\parbox{\dimexpr\textwidth-\fboxsep-\fboxrule\relax}{\copyrighttext}}};
\end{tikzpicture}%
}
\begin{document}

\title{A Joint Optimization Approach for Power-Efficient Heterogeneous OFDMA Radio Access Networks}

\author{\IEEEauthorblockN{Gabriel O. Ferreira\href{https://orcid.org/0000-0002-4592-8975}{\textsuperscript{\includegraphics[scale=0.01]{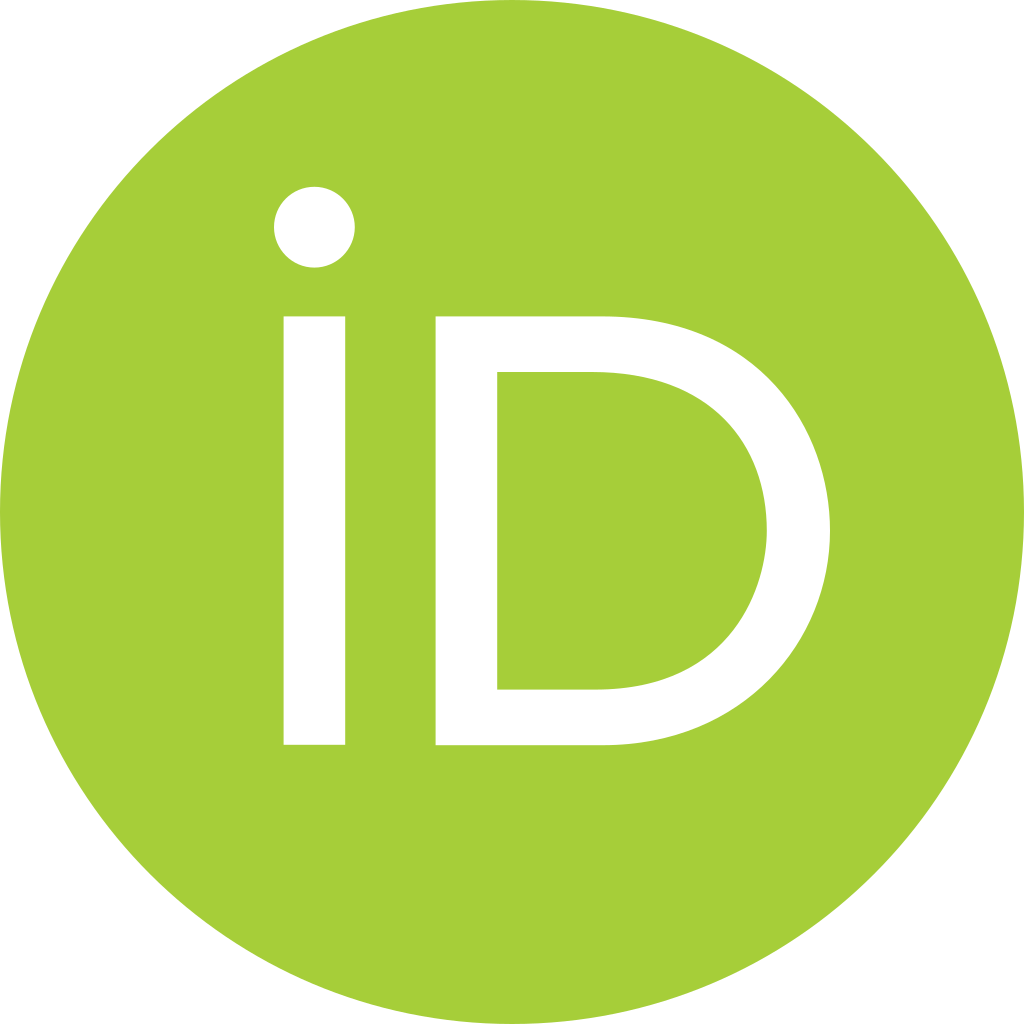}$^{1}$}},
André F. Zanella\href{https://orcid.org/0000-0001-8828-6629}{\textsuperscript{\includegraphics[scale=0.01]{Figures/ORCID_Logo.png}$^{2,3}$}}, 
Stefanos Bakirtzis\href{https://orcid.org/0000-0002-7958-0495}{\textsuperscript{\includegraphics[scale=0.01]{Figures/ORCID_Logo.png}$^{4,6}$}}, 
Chiara Ravazzi\href{https://orcid.org/0000-0002-3186-6195}{\textsuperscript{\includegraphics[scale=0.01]{Figures/ORCID_Logo.png}$^{1}$}}, 
Fabrizio Dabbene\href{https://orcid.org/0000-0002-2258-8971}{\textsuperscript{\includegraphics[scale=0.01]{Figures/ORCID_Logo.png}$^{1}$}},
Giuseppe C. Calafiore\href{https://orcid.org/0000-0002-6428-5653}{\textsuperscript{\includegraphics[scale=0.01]{Figures/ORCID_Logo.png}$^{5}$}}, 
Ian Wassell\href{https://orcid.org/0000-0001-7927-5565}{\textsuperscript{\includegraphics[scale=0.01]{Figures/ORCID_Logo.png}$^{4}$}}, 
Jie Zhang\href{https://orcid.org/0000-0002-3354-0690}{\textsuperscript{\includegraphics[scale=0.01]{Figures/ORCID_Logo.png}$^{6}$}}, 
%
%
Marco Fiore\href{https://orcid.org/0000-0002-0772-9967}{\textsuperscript{\includegraphics[scale=0.01]{Figures/ORCID_Logo.png}$^{2}$}}} 
\thanks{This work was supported by the European Commission through the Horizon 2020 Framework Programme, H2020-MSCA-ITN-2019, MSCAITN-EID, Proposal No. 860239, BANYAN. The work of Stefanos Bakirtzis is supported by the Onassis Foundation and the Foundation for Education and European Culture. \{chiara.ravazzi, fabrizio.dabbene, gabriel.oliveiraferreira\}@ieiit.cnr.it, \{andre.zanella, marco.fiore\}@imdea.org, giuseppe.calafiore@polito.it, \{jie.zhang\}@ranplanwireless.com, 
\{ssb45, ijw24\}@cam.ac.uk}
\thanks{\IEEEauthorblockA{$^{1}$IEIIT CNR Institute, Italy, $^{2}$IMDEA Networks Institute, Spain, $^{3}$Universidad Carlos III de Madrid, Spain, $^{4}$University of Cambridge, UK, $^{5}$Politecnico di Torino, Italy, and CECS, VinUniversity, Vietnam, $^{6}$Ranplan Wireless Network Design Ltd, UK.}
}
}

\maketitle

\copyrightnotice\vspace*{-2pt}

\begin{abstract}
Heterogeneous networks have emerged as a popular solution for accommodating the growing number of connected devices and increasing traffic demands in cellular networks. While offering broader coverage, higher capacity, and lower latency, the escalating energy consumption poses sustainability challenges.
In this paper a novel optimization approach for orthogonal heterogeneous networks is proposed to minimize transmission power while respecting individual users' throughput constraints. The problem is formulated as a mixed integer geometric program, and optimizes at once multiple system variables such as user association, working bandwidth, and base stations transmission powers.
Crucially, the proposed approach becomes a convex optimization problem when user-base station associations are provided.
Evaluations in multiple realistic scenarios from the production mobile network of a major European operator and based on precise channel gains and throughput requirements from measured data validate the effectiveness of the proposed approach.
Overall, our original solution paves the road for greener connectivity by reducing the energy footprint of heterogeneous mobile networks, hence fostering more sustainable communication systems. 
\end{abstract}

\begin{IEEEkeywords}
Heterogeneous mobile networks, orthogonal, energy efficiency, geometric optimization, resource allocation.
\end{IEEEkeywords}

\section{Introduction}
\label{sec:Intro}

\IEEEPARstart{T}{he} efficiency of Radio Access Networks (RAN) is of ever-growing importance to guarantee the combined performance and sustainability of mobile networks. The increasing user traffic demands, the growing number of connected devices and objects, the emergence of new classes of services with very strict Quality of Service (QoS) requirements, and the surging heterogeneity of applications jointly set very high expectations and standards for the operation of 6G RANs~\cite{bc_2021}. As a consequence, RAN optimization is a subject that has received substantial attention from the scientific community, and a wide range of problems have been explored in this area, including how to ensure energy savings via spectral efficiency, transmission power minimization, base stations (BS) deployments, and user association, among others.

Optimizing the many variables that characterize modern RANs is especially challenging in ultra-dense heterogeneous networks (HetNet), where macro, micro, and femto cells are jointly deployed within a relatively small geographical area in order to provide high wireless capacity and enhanced QoS to the local end terminals~\cite{Borralho2021}.
These dense and hierarchical deployments are today a reality, especially in populated urban centers: Figure \ref{fig:hetnet-paris} shows an example of such a RAN configuration observed in a production-grade mobile network, which includes overlapping macro and micro cells providing coverage to the city center of a major European metropolis. In these heterogeneous RAN infrastructures, a lack of optimization of transmission powers, resource allocations, and assignments between users and BSs might lead to high inefficiency, such as high transmission powers and low QoS for users \cite{David2022,Domenico2014}.

In heterogeneous cellular infrastructure scenarios, not only the configurations of the RAN is especially complex, but, if not performed properly, it also exposes the mobile network operator to high expenditures in terms of energy costs that risk to make the whole technology not viable from an economic viewpoint.
The problem is exacerbated in newer generation of Radio Access Technologies (RAT) that are increasingly demanding from an energy perspective: for instance, 5G New Radio is known to be two to three times more energy-consuming than its 4G equivalent~\cite{ericsson2020q4}, due to a need to face much higher traffic volumes per user.
The problem is such that over 90\% of leading mobile network operators have expressed concerns about the rise in energy-induced operating expenses (OPEX)~\cite{AT_2019}.
It is thus unsurprising that many operators and equipment vendors are investing significant effort on developing solutions to make RANs more energy prudent~\cite{Nokia}.

In this paper, we contribute to the endeavour of making dense, heterogeneous RAN deployments greener, by proposing and efficiently solving a novel joint optimization problem that allows minimizing transmission power in orthogonal  networks composed of different types of cells, while meeting individual users' throughput requirements. Unlike many previous works in the field, our optimization approach does not rely on iterative techniques that, in general, can only guarantee local minima through a computationally demanding search, whilst their convergence highly depends on the initial condition selection. In addition, our problem formulation allows considering multiple variables simultaneously, \eg user association, resource allocation, and transmission power, instead of dividing the optimization problem into smaller ones in a \textit{ceteris paribus} manner, as discussed in Section~\ref{sec:RW}.

We achieve the targets above by 
deriving original piecewise concave approximations of the Shannon-Hartley formula, and consequently applying a variable transformation that enables posing the optimization problem into a mixed-integer geometric program (MIGP) form. A differentiating aspect of our study is also that, unlike previous works that rely on simplistic evaluation scenarios, we demonstrate the effectiveness of the proposed optimization framework in dependable settings where up to 800 users generate demands modeled after mobile network traffic measurements and served by real-world heterogeneous RAN deployments, in the presence of wireless channel characteristics reproduced via a high-performance signal propagation solver.

The remainder of the manuscript is organized as follows: in Section \ref{sec:RW} we present the related works, while the target scenario, its constraints, and the problem we aim to solve are analyzed in Section \ref{sec:PF}. The mixed integer geometric program is formulated in Section \ref{sec:MIGP}, and the realistic example is exploited in Section \ref{sec:example}. Finally, conclusions and future works are discussed in Section \ref{sec:C}.

\begin{figure}
    \centering
    \includegraphics[width=1\columnwidth]{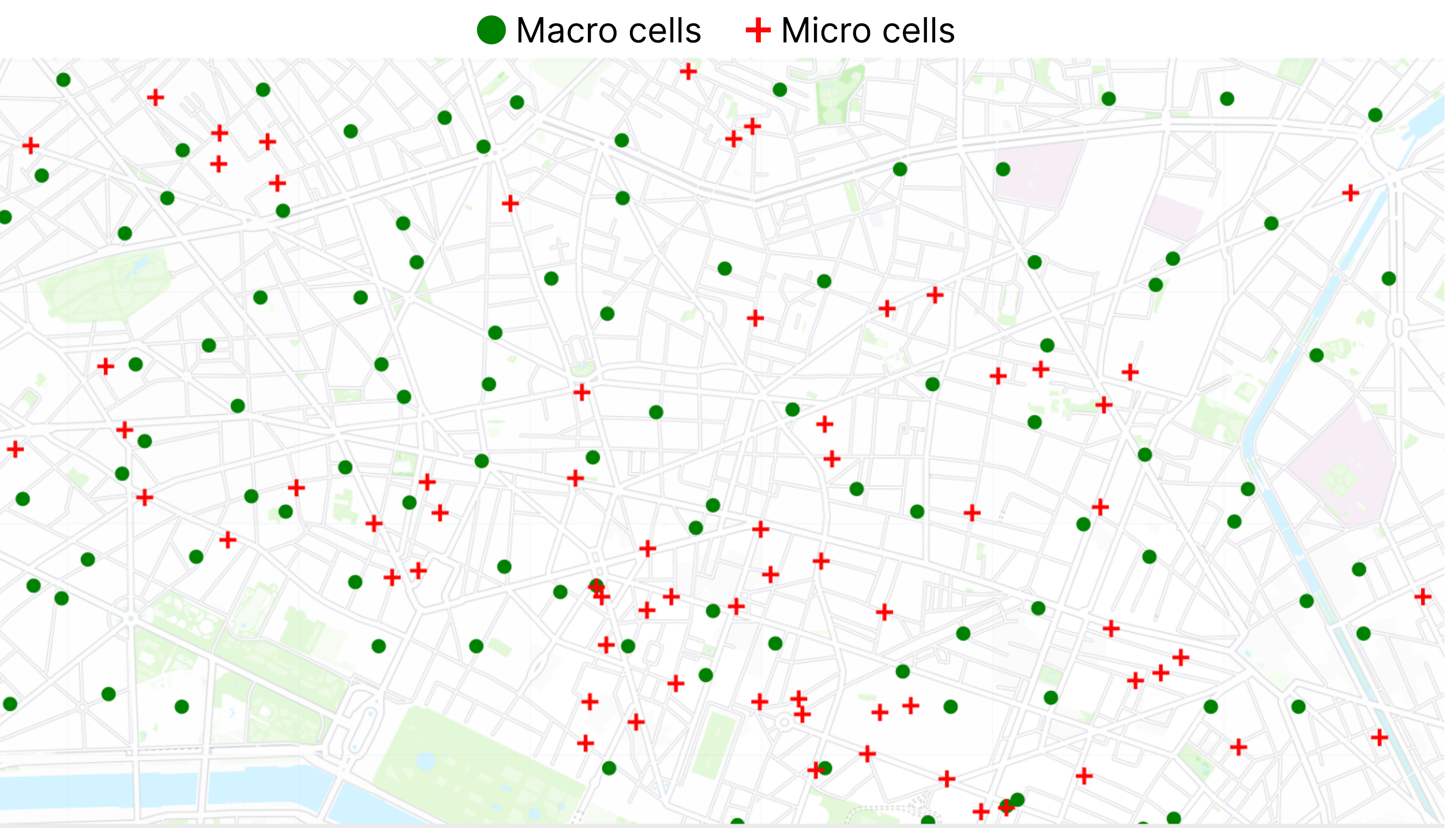}
    \caption{Real-world topology of an operational heterogeneous network deployed in a neighborhood of a large European city.}
    \label{fig:hetnet-paris}
\end{figure}


\section{Related work}
\label{sec:RW}

The approaches proposed in the literature to tackle the HetNet resource allocation problem can be grouped into three general classes: (i) iterative/sequential-based, (ii)  global optimization-based, and (iii) convex optimization-based. With respect to such classification, the solution we propose may be seen as a specific case of the third class, being based on a suitably designed convex approximation of the original problem.
We expound on these approaches separately in the next subsections.  

\subsection{Iterative/sequential approaches}
\label{sub:iterative}

Two types of iterative/sequential approaches can be distinguished. In the first one, the non-convex function is linearized and solved approximately around a point of the optimization space. Then, based on the solution, a new point of the optimization space is selected and this process is repeated until convergence is achieved. The second solves the problem in  \textit{ceteris paribus} manner, splitting the problem variables into two subsets of constant and non-constant parameters. Then,  a  solution to a simplified version of the optimization problem is retrieved  considering only the impact of the non-constant parameters. Consequently, the parameters of the two sets are interchanged, \ie those that were previously constant are now variables and vice versa,   and  a solution for the new problem is computed.  Again, this process is repeated until convergence to a local minimum is achieved. 

Along these lines, in \cite{Foolad2013}, the authors studied the problem of joint user association and resource allocation in heterogeneous networks, assuming that the transmission power at all time is the maximum allowed. A relaxation for an NP-Hard optimization problem was developed, where the new convex formulation is used to obtain upper bounds to the network aggregated throughput. The authors then propose an association rule based on users' signal to interference and noise ratio (SINR) levels and prove its efficacy by comparing the optimal values obtained for the relaxed convex problem with the one when the assignments are obtained with the association rule. Through experiment, the authors show that the optimal values are tight and conclude that their association technique is a simple but yet effective rule.     


Optimization problem for orthogonal frequency division multiple access (OFDMA) heterogeneous networks is studied in \cite{wang2017} for downlink transmissions. The work proposes a joint optimization for user association, sub-channel allocation, and power allocation. The problem is divided into two: first, the powers are fixed and a solution for users association and sub-channel allocation is obtained with the Hungarian algorithm; subsequently, the variables of the first sub-problem are fixed, and a power allocation problem is solved with the difference of two concave functions approximation method. Then, these problems are solved alternately to obtain a local optimal solution. \cite{Nikolaos2016} proposes analytical solutions for association problems subject to backhaul constraints and network topology, allowing each user equipment (UE) at a given location to determine its optimal connecting BS. Subsequently, an iterative algorithm to reach the optimal load of the BS inside the heterogeneous network is derived.  

In \cite{Le2019}, energy efficiency (EE) resource allocation for OFDMA heterogeneous networks with dense deployment is studied. The proposed algorithms have as constraints minimum throughput for delay sensitive users and fairness for delay tolerant users, in addition to the ones imposed by the OFDMA technology. The resource block and power allocations problems, which are non-convex and with combinatorial characteristics, are tackled with three different approaches, named: mixed-integer programming formulation, time-sharing formulation, and sparsity inducing formulation. All of them lead to non-convex problems, which is circumvented using successive convex approximation (SCA) methods, obtaining the problem solution through iterative procedures. 

The iterative/sequential approaches have two main limitations in our context:
 \begin{enumerate}
     \item The optimal solution of sequential approaches are highly dependent on the initial condition provided to the solver, usually leading to local minimum. More importantly, providing a feasible initial resource allocation for large scale problems can be extremely difficult.
     \item The calculation of the gradient (gradient based approach) or
     the two convex functions to represent the non-convex constraints (DC programming) only work for points that are close enough, i.e inside a validity region, from the initial condition. Therefore, small steps inside the decision variables set are taken. These procedures might be time consuming due to possibly many iterations required. Additionally, defining the validity region size is not a trivial task. 
\end{enumerate}

\subsection{Global optimization techniques}

A second class of solution directly tackles the nonconvex optimization problem by means of general global optimization techniques, such as genetic algorithms, particle swarm/ant-colony optimization,
Lyapunov drift plus penalty techniques, and  Graph Theory based stochastic optimization \cite{Liu2020,Bao2014,Bao2015}.
These strategies have been especially studied in the context of heterogeneous cellular networks. For a better overview on 5G resource allocation for heterogeneous networks the reader may refer to the Survey in \cite{xu2021}, where the authors present the taxonomy and strategies applied to such scenarios.

A particular class of solutions we highlight here are those based on neural networks (NNs) approximations. Indeed, NNs, known to be universal approximators of a very large class of functions, have been also applied to resource/power allocation problems. In \cite{Sun2022}, beamforming coordination and sub-carriers/power allocation problems are addressed with the neural network based algorithm transfer learning multi-agent deep Q-network (TL-MADQN). The algorithm is able to extract knowledge from pre-trained agents (BS or channel gains) and adapt to new networks environments, selecting the most appropriate allocation strategy. In \cite{Pan2021}, the problem of joint association and resource allocation to maximize the system throughput is formulated as a mixed integer quadratically constrained quadratic program. The authors decompose the problem into two, and the sub-part associated to the binary variables is solved with a neural network (NN) approach. The proposed NN is based on encoder-decoder architecture and makes use of the well-known long-short term memory (LSTM) and pointer network mechanism.  

Global optimization based approaches generally lead to computationally expensive solutions. More importantly, they usually fail to provide convergence guarantees (usually converging to local optima), and they may require fine tuning of different hyperparameters, which may be difficult in practice. Also, they usually scale badly with the problem dimensions.  
Moreover, in the case of NN-based solutions, they require substantial quantity of data during the training phase, which might not be available in real-world contexts.

\subsection{Convex optimization problems}

Recently, a third line of attack to the problem has emerged, inspired by the work of Boyd \cite{Boyd:03}. These convex optimization-based approaches aim at transforming the original problem by rewriting/approximating it in a convex form. For instance, \cite{Boyd:03} formulates a convex optimization problem based on geometric programming (GP) to solve the problem of simultaneous routing and power allocation in code-division multiple access (CDMA) wireless data networks. GP is also used in \cite{Boyd2002}, where the problems of minimizing transmitter power subject to outage probability and minimizing outage probability subject to transmission power constraints are written in such a format. In that work, an outage happens when the quality of service of a user, measured in terms of signal to interference ratio (SIR), is less than a given threshold value. Many of the works discussed in the previous topic also apply convex optimization techniques, however solving the problem multiple times through successive approximation methods. 

In the authors' opinion, this latter class of approaches represents the most promising solution to our problem for several reasons: i)  convex reformulations lead to problems which are easily solvable with available convex optimization tools, ii) they are guaranteed to converge to the global optimum of the approximation/reformulation, and iii) more importantly, they scale polynomially with the problem size.

On the other hand, this approach has not been further developed for some technical reasons: it is in general difficult to extend the original formulation of \cite{Boyd:03} to more complex setups, due to the fact that the logarithmic  approximation  $\log_2(1+\gamma)\approx \log_2(\gamma)$ introduced in that paper does not easily allow for generalizations (such as optimizing the amount of bandwidth allocated to each user), and becomes loose for small values of $\gamma$.
The work we present here exactly addresses and solves these issues: by proposing a novel approximation based on piecewise power functions, we show that we can still obtain a GP formulation, and, by increasing the number of approximating intervals, tightly approximate the solution.
Moreover, being based on convex programming, our solution does not require a feasible initial resource allocation nor training data availability. 

Finally, we point out that most of the  solutions proposed in the literature aim at maximizing user throughput subject to a fixed bound on the total power consumption. As discussed in the introduction, we choose a different approach here, aiming at minimizing the power consumption under constraints of guaranteed single-user throughput. Indeed, while our approach may be easily adapted to the problem usually considered in the literature, since we formulate both constraints and objective function as convex functions, we feel that the problem formulation discussed here better complies with the increasing request of designing green and sustainable telecommunication networks.

\color{black}

\subsection*{Notation}
\noindent $[n]$ denotes $\{1,\ldots,n\}$. For $x\in\mathbb{R}$, the notation
$\lfloor x \rfloor$ (resp. $\lceil x \rceil$) represents floor($x$) (ceil($x$)), i.e. is the largest (smallest) integer smaller (larger) than $x$.

\section{Problem Formulation}
\label{sec:PF}
Let us consider a scenario where each of $n$ users shall be connected to one base station (BS) among $N$ possible. The downlink transmission rate of a user $i$ connected to a BS $j$ can be calculated with the \textit{Shannon-Hartley Theorem} \cite{Shannon}:
\begin{equation}\label{SINR_0}
    r_{ij} = x_{ij} B_j \log_2(1+S_{ij}(\mathcal{P})),
\end{equation}
\noindent where we define:
\begin{itemize}
    \item[i)] $P_j$: transmission power (W) of one resource block (RB) in BS $j$. Let $\mathcal{P}  \doteq [P_1,\ldots,P_N]$.
    
    \item[ii)] $r_{ij}$: throughput (bits/s) of user $i$ connected to BS $j$.

    \item[iii)] $B_j$: bandwidth (Hz) of BS $j$.
    
    \item[iv)] $x_{ij}$: resources of BS $j$ assigned to user $i$. Hence, $x_{ij}B_{j}$ can be seen as the working bandwidth assigned by BS $j$ to user $i$. We denote by $x\in [0,1]^{n\times N}$ the matrix of the elements $x_{ij}$, with $x_{ij} \in [0,1]$. 

    \item[v)] $S(\mathcal{P})$: Signal-to-Interference-Noise-Ratio. It is the ratio of the power of the wanted signal to the sum of noise and interfering powers of the other BSs, defined as:
    \begin{equation}\label{SINR}
       S_{ij}(\mathcal{P}) = \frac{P_jg_{ij}}{\sigma^{2} + \sum_{k \neq j}P_k g_{ik}}, 
    \end{equation}
 where $\sigma^{2}$ denotes the noise power.
\end{itemize}

A few remarks are at hand regarding the considered setup. 

First, we point out that our formulation considers the case in which the total bandwidth $B_j$ of an antenna is shared equally between a finite number of resource blocks of identical transmission power for the same BS.
This is captured in iv) by the continuous variables $x_{i,j}$, which shall be interpreted as the percentage of resource blocks of BS $j$ assigned to user $i$.
Of course, in real-world applications {$x_{ij}$ are discrete variables. However, if the number of resource block is sufficiently large, this approximation rapidly becomes negligible. Details on how the subcarriers are divided in the considered OFDMA setup, and on how to obtain the discrete variables values given the continuous $x_{ij}$, are provided in Section \ref{sec-subcarrier}.

Second, regarding point v), we note that in equation \eqref{SINR}, $P_j$ is the power of the BS to which the user is connected and $P_k$, $k \neq j$ are the powers of the remaining ones. The quantities $g_{ij}$ and $g_{ik}$ represent the channel gains, which are assumed to be known. Their estimation is discussed in Section \ref{sec:data_gen}.
%
%

\subsection{Constraints}

One of the requirements of the problem is that a user must be connected to just one BS. To specify this constraint, we introduce binary variables $z_{ij}\in\{0,1\},$
$i\in[n]$, $j\in[N]$ as
\begin{equation*}
    z_{ij} = \begin{cases}
      \text{1, if user $i$ is connected to BS $j$,}\\
      \text{0, otherwise.}\\
      \end{cases}
  \end{equation*}
$z\in\{0,1\}^{n\times N}$ is the matrix with elements $z_{i,j}$. Then, the constraints of the considered optimization problem may be written as follows.
\begin{enumerate}[]
\item 
Each user must be connected to just one BS
        \begin{equation}\label{eq:y}
            \sum_{j=1}^{N} z_{ij} = 1, \quad i\in[n].
        \end{equation}

\item
A BS $j$ cannot provide more resources than it has available, that is:
        \begin{equation}\label{eq:x}
            \sum_{i=1}^{n} x_{ij} \leq 1,
            \quad j\in[N].
        \end{equation}
\item 
Each user $i$ has a minimum required throughput $t_i$ (bits/s): 
\begin{equation}\label{eq:r}
    r_{ij} \ge t_i.
\end{equation}
where $t \in \mathbb R_{\ge 0}^{n}$ is the vector with the elements $t_i$.
Combining \eqref{SINR_0} with equation in \eqref{eq:y}, the constraint \eqref{eq:r} can be rewritten in the following equivalent form:
\begin{equation}\label{eq:r2}
\sum_{j=1}^N \left[x_{ij} B_j \log_2(1+S_{ij}(\mathcal{P}))\right] z_{ij} \geq t_i, \quad i\in[n].
\end{equation}
\end{enumerate}

\subsection{Optimization problem}
The optimization problem that minimizes the BS transmission powers and meets the constraints above is written as 
\begin{equation}
    \begin{aligned}
        & \min_{x, z, P} \hspace{0.5cm} \sum_{j=1}^{N} P_j\\
        & \text{s.t.} \,x\in[0,1]^{n\times N}\\
        &\,z\in\{0,1\}^{n\times N}\\
        &\,\mathcal{P}\geq0\\
        & \text{constraints \eqref{eq:y}, \eqref{eq:x} and \eqref{eq:r2}}.
    \end{aligned}
\label{eq:Opt_Problem}
\end{equation}
Note that the above optimization problem is usually very hard to solve, since it entails a combination of binary and continuous variables. More importantly, constraint \eqref{eq:r2} is highly non convex, and also contains a product of continuous and binary variables,
to deal with the requirement of assigning a user to just one BS. To tackle this last issue, we make use of a standard optimization technique known as the \textit{Big-M Method} \cite{Bazaraa,Asghari}.

\begin{lemma}[Big-M trick]\label{lemma:bigM}
Let $P_j \doteq e^{y_j}$ and define 
\begin{equation}\label{eq:s(y)}
f(x_{ij},y)\doteq x_{ij}B_j\log_2(1+S_{ij}(y)),    
\end{equation}
where $y$ is a vector with the elements $y_j$ and with a slight abuse of notation we let $S_{ij}(y)\doteq S_{ij}(P(y))$.
Then, the set of constraints in \eqref{eq:y} and \eqref{eq:r2} can be rewritten as follows 
\begin{align}\label{eq:trick1}
    &f(x_{ij},y)\geq t_i-M \overline{z}_{ij}, \quad i\in[n],  j\in[N],   \\
    \label{eq:trick2}
  & \sum_{j=1}^N \overline{z}_{ij} = N-1, \quad i\in[n]
\end{align}
where $M$ is a sufficiently large constant.
\end{lemma}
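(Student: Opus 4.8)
The plan is to show the equivalence by introducing the complementary binary variables $\overline{z}_{ij} \doteq 1 - z_{ij}$ and verifying that, under this substitution, the pair of conditions \eqref{eq:trick1}--\eqref{eq:trick2} reproduces exactly the feasible set carved out by \eqref{eq:y} together with \eqref{eq:r2}. First I would observe that, since $z_{ij} \in \{0,1\}$, we have $\overline{z}_{ij} \in \{0,1\}$ as well, and the single-association constraint $\sum_{j} z_{ij} = 1$ is algebraically equivalent to $\sum_{j} \overline{z}_{ij} = \sum_j (1 - z_{ij}) = N - 1$, which is precisely \eqref{eq:trick2}. So the two formulations agree on which $z$ (equivalently $\overline{z}$) vectors are admissible, and for each user $i$ exactly one index $j$ has $\overline{z}_{ij} = 0$ while the remaining $N-1$ have $\overline{z}_{ij} = 1$.

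Next I would examine constraint \eqref{eq:trick1} index by index for a fixed user $i$. Let $j^\star$ be the unique BS with $\overline{z}_{ij^\star} = 0$, i.e. $z_{ij^\star}=1$. For $j = j^\star$, \eqref{eq:trick1} reads $f(x_{ij^\star}, y) \geq t_i$, which — recalling the definition \eqref{eq:s(y)} and $P_j = e^{y_j}$ — is exactly the throughput requirement $r_{ij^\star} \geq t_i$; and this is in turn exactly the single surviving term of the sum in \eqref{eq:r2}, since all other terms carry the factor $z_{ij} = 0$. For every $j \neq j^\star$, \eqref{eq:trick1} reads $f(x_{ij}, y) \geq t_i - M$. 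Here is where the ``sufficiently large $M$'' hypothesis does the work: I would argue that $f(x_{ij},y)$ is bounded below (trivially by $0$, since $x_{ij} \geq 0$, $B_j > 0$ and $S_{ij} \geq 0$ imply $f \geq 0$), and $t_i$ is a finite constant, so choosing any $M \geq \max_i t_i$ — or more conservatively $M$ larger than the maximum possible value of $t_i - f$, e.g. $M = \max_i t_i$ suffices — makes $t_i - M \leq 0 \leq f(x_{ij},y)$, so these constraints are vacuous and impose nothing. Hence \eqref{eq:trick1} collapses, for each $i$, to the single effective inequality $f(x_{ij^\star},y) \geq t_i$, matching \eqref{eq:r2} exactly. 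Conversely, any $(x,y,z)$ feasible for \eqref{eq:y}, \eqref{eq:r2} yields, via $\overline{z} = 1 - z$, a point satisfying \eqref{eq:trick1}--\eqref{eq:trick2} by the same term-by-term comparison.

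The main obstacle is making the phrase ``sufficiently large constant'' precise and legitimate: one must exhibit an explicit, a priori computable bound on $M$ that does not depend on the (unknown) optimal $x$ and $y$. This is where I would be careful — the clean way is to note that the relaxed constraints $f(x_{ij},y) \geq t_i - M\,\overline{z}_{ij}$ are only ever slackened (never tightened) when $\overline{z}_{ij}=1$, and a lower bound on $f$ valid over the entire feasible box ($x_{ij}\in[0,1]$, $\mathcal P \ge 0$) is simply $f \geq 0$; therefore $M \geq \max_{i\in[n]} t_i$ is guaranteed to be large enough, and this bound is fully explicit in the problem data. I would close by remarking that the product $x_{ij} z_{ij}$ in \eqref{eq:r2} has thereby been eliminated, so \eqref{eq:trick1} is now separately affine in $\overline{z}_{ij}$, at the cost of coupling each user's constraints across all $N$ base stations rather than a single summed constraint — which is exactly the reformulation needed for the subsequent geometric-programming treatment. $\hfill\endproof$
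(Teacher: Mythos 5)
Your proof is correct and takes essentially the same approach as the paper: the Big-M constraint is active exactly at the unique base station with $\overline{z}_{ij}=0$ and rendered vacuous elsewhere, while $\sum_{j}\overline{z}_{ij}=N-1$ encodes the single-association requirement \eqref{eq:y}. As a minor improvement, your explicit choice $M \ge \max_{i\in[n]} t_i$ (justified by $f(x_{ij},y)\ge 0$ over the feasible set) is cleaner than the paper's condition $M \gg f(x_{ij},y)$, which implicitly depends on the decision variables.
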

\smallskip

\begin{proof}
The constraint in \eqref{eq:trick1} is inactive when $\overline{z}_{ij} = 1$, since ${f}(x_{ij},y) \ge -M$ is always satisfied if $M$ is chosen large enough. On the contrary, $\overline{z}_{ij} = 0$ activates the restriction and the user's throughput requirement is respected. Constraint \eqref{eq:trick2} forces each user to be connected to only one BS (user $i$ is connected to $j$ if and only if $\overline{z}_{ij} = 0$). From \eqref{eq:trick2}, one can write 
\begin{align*}
    &\sum_{j=1}^N 1-{z}_{ij} = N-1, \quad  \sum_{j=1}^N {z}_{ij} = 1, \quad i\in[n],
\end{align*}
repeating constraint in \eqref{eq:y}. By multiplying \eqref{eq:trick1} with $z_{ij}$ and summing over $j$, we have
\begin{align*} 
     &\sum_{j=1}^{N} {f}(x_{ij},y) z_{ij} \geq t_i\sum_{j=1}^{N} z_{ij} -M\sum_{j=1}^{N} {\overline{z}_{ij}} z_{ij}, \quad i\in[n], \\
    &\sum_{j=1}^{N} {f}(x_{ij},y) z_{ij} 
     \geq t_i.
\end{align*}
which is the constraint in \eqref{eq:r2}. Then \ref{eq:trick1}-\ref{eq:trick2} $\Rightarrow$ \eqref{eq:y}-\eqref{eq:r2}. Conversely, \eqref{eq:r2} can be written as
\begin{equation}
    f(x_{ij},y)z_{ij} \geq t_i - \sum_{k\neq j}^{N-1}z_{ik} \hspace{0.1cm}f(x_{ij},y), \hspace{0.2cm} j\in[N].
\end{equation}
Since each user is connected to just one BS, we have that $\sum_{k\neq j}^{N-1}z_{ik} = \bar{z}_{ij}$, leading to
\begin{equation}\label{eq:big_M_1}
     f(x_{ij},y)z_{ij} \geq t_i -\bar{z}_{ij} f(x_{ij},y), \hspace{0.2cm} j\in[N].
\end{equation}
If constraint in \eqref{eq:big_M_1} is satisfied, the following is also respected:
\begin{equation}\label{eq:big_M_2}
     f(x_{ij},y)z_{ij} \geq t_i -\bar{z}_{ij} M, \hspace{0.2cm} j\in[N],
\end{equation}
for $M \gg f(x_{ij},y)$, as
\begin{equation*}
    \begin{cases}
      f(x_{ij},y) \geq t_i, \hspace{0.1cm}\text{if} \hspace{0.1cm} z_{ij}=1,\\
      0 \geq t_i - M,\hspace{0.1cm}\text{if}\hspace{0.1cm}z_{ij}=0.\\
      \end{cases}
  \end{equation*}
Note that the constraint is active when $z_{ij}=1$ and inactive otherwise, exactly like the formulation in \eqref{eq:trick1}-\eqref{eq:trick2}. Then \eqref{eq:y}-\eqref{eq:r2}$\Rightarrow$ \ref{eq:trick1}-\ref{eq:trick2}. Therefore \eqref{eq:y}-\eqref{eq:r2}$\Leftrightarrow$ \ref{eq:trick1}-\ref{eq:trick2}, concluding the proof.
\end{proof}
\section{Mixed Integer Geometric programming}
\label{sec:MIGP}

The proposed approach is based on a piecewise power function approximation of the throughput constraint followed by variables transformation, leading to a mixed integer geometric program. 

\subsection{Piecewise power function approximation}

A key ingredient to our approach is the introduction of a piecewise power function (PPF) approximation. The proposed PPF approach is a method that allows to approximate a function with a set of $m$ simpler concave and monotonically increasing functions in power form. Here the approach is employed to achieve a lower approximation of $\log_2(1+\gamma)$, where $\gamma>0$ represents the signal-to-interference-plus-noise ratio (SINR). By dividing the range of $\gamma$ into intervals, the original function is approximated using a family of $m$ simpler power functions of the form $a\gamma^b$.  This lower approximation ensures that the resulting PPF captures the essential behavior of the original one within each interval.\\
More importantly, we show how this introduced approximation allows to reformulate the original problem into a log-sum-exp form, thus providing a computationally efficient and tractable representation for optimization and mathematical analysis. In a sense, this approach can be seen as a generalization and extension of the original approximation proposed by Boyd in \cite{Boyd:03}, who proposed to approximate $\log_2(1+\gamma)\approx \log_2(\gamma)$.

Formally, we consider to be  given an interval $[0,\Tilde{\gamma}]$. Then, we divide this interval into $m$ subintervals with extremes 
\[
0=\gamma_1,\gamma_2,\ldots, \gamma_m,\gamma_{m+1}=\Tilde\gamma,
\]
and, for $\ell\in[m]$, find $m$ approximations of the form 
\begin{equation}
    \label{eq:phi}
    \varphi_{\ell}(\gamma)=a_{\ell}\gamma^{b_{\ell}},\quad a_{\ell}>0, b_{\ell}\in(0,1)
\end{equation}
valid in the interval $[\gamma_{\ell},\gamma_{\ell+1}]$.


The parameters $a_{\ell},b_{\ell}$ of this function are computed by imposing that the function $\varphi_{\ell}$ in \eqref{eq:phi} interpolates $\log_2(1+\gamma)$ at the extremes $\gamma_{\ell},\gamma_{\ell+1}$. That is, 
we solve for $a_{\ell}$ and $b_{\ell}$ the following system of equations
\begin{equation}\label{eq:gbound}
    \begin{cases}
      \varphi_{\ell}(\gamma_{\ell}) =a_{\ell} \gamma_{l}^{b_{\ell}} = \log_2(1+\gamma_{\ell})\\
      \varphi_{\ell}(\gamma_{\ell+1}) = a_{\ell} \gamma_{\ell+1}^{b_{\ell}} = \log_2(1+\gamma_{\ell+1}).
    \end{cases}
\end{equation}
System \eqref{eq:gbound}
consists of a pair of equations in two unknown variables and, as long as $\gamma_{\ell+1}>\gamma_{\ell}>0$, admits the unique solution
\begin{equation}
\begin{aligned}
\label{eq:ab}
    b_{\ell} &= \frac{\log (\log_2(1+\gamma_{\ell+1}) ) - \log (\log_2(1+\gamma_{\ell}) )}{\log (\gamma_{\ell+1}) - \log(\gamma_{\ell})}, \\ \\
    a_{\ell} &= e^{\log (\log_2(1+\gamma_{\ell+1})) - b_{\ell} \log(\gamma_{\ell+1})}.
\end{aligned}
\end{equation}

The following proposition proves some key properties of the introduced power functions. 

\vskip 3mm
\begin{proposition}
For any 
$\varphi_\ell$ defined as in \eqref{eq:phi}, for $\ell\in[m]$,  we have
\begin{enumerate}
\item $\varphi_\ell(0)=0$;
\item $\varphi_\ell$ is monotonically increasing in $(0,+\infty)$;
\item $\varphi_\ell$ is a concave function in $(0,+\infty)$.
\end{enumerate}
\end{proposition}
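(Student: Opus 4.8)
The plan is to derive all three items directly from the closed-form expression $\varphi_\ell(\gamma)=a_\ell\gamma^{b_\ell}$ together with the sign conditions $a_\ell>0$, $b_\ell\in(0,1)$ stipulated in \eqref{eq:phi}, and then, for completeness, to verify that the interpolation construction \eqref{eq:gbound}--\eqref{eq:ab} is consistent with those sign conditions.

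For item~1 I would substitute $\gamma=0$ and use $b_\ell>0$, so that $0^{b_\ell}=0$ and hence $\varphi_\ell(0)=a_\ell\cdot 0^{b_\ell}=0$. For item~2, differentiating once gives $\varphi_\ell'(\gamma)=a_\ell b_\ell\,\gamma^{b_\ell-1}$, which is strictly positive on $(0,+\infty)$ since $a_\ell>0$ and $b_\ell>0$; hence $\varphi_\ell$ is (strictly) increasing. For item~3, a second differentiation yields $\varphi_\ell''(\gamma)=a_\ell b_\ell(b_\ell-1)\,\gamma^{b_\ell-2}$, which is strictly negative on $(0,+\infty)$ because $b_\ell-1<0$ while $a_\ell,b_\ell>0$; therefore $\varphi_\ell$ is concave there. (Alternatively, concavity follows without calculus from the fact that $t\mapsto t^{b_\ell}$ is concave for $b_\ell\in(0,1)$ and $\varphi_\ell$ is a positive multiple of it.)

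The only part with any substance is checking that the coefficients produced by \eqref{eq:ab} indeed lie in the claimed ranges. Positivity of $a_\ell$ is immediate, being an exponential. For $b_\ell$, note that $0<\gamma_\ell<\gamma_{\ell+1}$ and that $\gamma\mapsto\log_2(1+\gamma)$ is strictly increasing and strictly positive on $(0,\infty)$, so the numerator and the denominator of the fraction defining $b_\ell$ in \eqref{eq:ab} are both strictly positive, giving $b_\ell>0$. To obtain $b_\ell<1$, I would rewrite this inequality equivalently (using that the denominator is positive and exponentiating) as $\log_2(1+\gamma_{\ell+1})/\gamma_{\ell+1}<\log_2(1+\gamma_\ell)/\gamma_\ell$, i.e. as strict monotone decrease of $h(\gamma)\doteq\log_2(1+\gamma)/\gamma$ on $(0,\infty)$; this in turn reduces to the elementary inequality $\gamma/(1+\gamma)<\ln(1+\gamma)$ for $\gamma>0$ (equivalently: $q(\gamma)\doteq\ln(1+\gamma)-\gamma/(1+\gamma)$ satisfies $q(0)=0$ and $q'(\gamma)=\gamma/(1+\gamma)^2>0$), which is precisely the concavity of $\log_2(1+\cdot)$ together with $\log_2(1+0)=0$. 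The degenerate boundary case $\ell=1$, where $\gamma_1=0$ makes \eqref{eq:ab} ill-defined, is covered by whatever convention fixes $b_1\in(0,1)$ on that first interval, and then items~1--3 hold for $\varphi_1$ by the same elementary argument. I do not foresee a genuine obstacle here: the computation is routine once the power-law form is in hand, and the single mildly delicate point is the $b_\ell<1$ bound, which collapses to the above one-line inequality.
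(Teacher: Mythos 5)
Your proof is correct and follows essentially the same route as the paper: item 1 by direct substitution, items 2--3 by computing $\varphi_\ell'(\gamma)=a_\ell b_\ell\gamma^{b_\ell-1}>0$ and $\varphi_\ell''(\gamma)=a_\ell b_\ell(b_\ell-1)\gamma^{b_\ell-2}<0$ under $a_\ell>0$, $b_\ell\in(0,1)$. Your additional verification that the interpolation coefficients \eqref{eq:ab} actually satisfy $a_\ell>0$ and $b_\ell<1$ (via strict decrease of $\log_2(1+\gamma)/\gamma$) is sound but goes beyond what the proposition asks, since the sign conditions are taken as given in \eqref{eq:phi}; note also that on the first subinterval the paper's convention is $b_1=1$ (a linear $\varphi_1$), not $b_1\in(0,1)$, under which monotonicity still holds and concavity holds only in the non-strict sense.
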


\begin{proof}
Point 1 can be directly verified. Regarding points 2-3, the first and second derivatives of $\varphi_\ell(\gamma)$ have the following properties
\begin{align*}
    \varphi_\ell'(\gamma) = \frac{a_\ell b_\ell}{\gamma^{1-b_\ell}} > 0, \hspace{0.1cm} \varphi_\ell''(\gamma) = \frac{a_\ell b_\ell (b_\ell-1)}{\gamma^{2-b_\ell}} < 0. 
\end{align*}
Therefore, the function $\varphi_\ell$ is monotonically increasing and concave in $(0,+\infty)$.
\end{proof}

%
%

\vskip 3mm

\begin{proposition}\label{Prop2}  
The function $\varphi_\ell(\gamma)$ defined as in \eqref{eq:phi}, on the interval $[\gamma_{\ell},\gamma_{\ell+1}]$ with $0<\gamma_{\ell}<\gamma_{\ell+1}<\infty$ has the following property
\begin{equation} \label{eq:interval}
    \begin{cases}
      \varphi_{\ell}(\gamma) \geq \log_2(1+\gamma), & \gamma \in [0,\gamma_{\ell}],\\
      \varphi_{\ell}(\gamma) \leq \log_2(1+\gamma), & \gamma \in [\gamma_{\ell},\gamma_{\ell+1}], \\
      \varphi_{\ell}(\gamma) > \log_2(1+\gamma), & \gamma \in (\gamma_{\ell+1},\infty).
    \end{cases}
  \end{equation}
\end{proposition}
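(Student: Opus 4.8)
The plan is to exploit the defining property \eqref{eq:gbound}, namely that $\varphi_\ell$ and $\log_2(1+\gamma)$ agree at the two endpoints $\gamma_\ell$ and $\gamma_{\ell+1}$, together with the concavity statements already established: by Proposition 1, $\varphi_\ell$ is concave on $(0,\infty)$, and the function $h(\gamma)\doteq\log_2(1+\gamma)$ is likewise concave (its second derivative is $-1/((1+\gamma)^2\ln 2)<0$). The key observation is that the difference $\psi_\ell(\gamma)\doteq\varphi_\ell(\gamma)-h(\gamma)$ is \emph{not} in general sign-definite, so I cannot argue by convexity of $\psi_\ell$; instead I will count sign changes of $\psi_\ell$ via the zeros of its derivative. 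First I would record that $\psi_\ell(\gamma_\ell)=\psi_\ell(\gamma_{\ell+1})=0$ by construction, and that $\psi_\ell(0)=\varphi_\ell(0)-h(0)=0-0=0$ as well, using Proposition~1(1). So $\psi_\ell$ already has three known zeros $0<\gamma_\ell<\gamma_{\ell+1}$ (with the caveat $0=\gamma_1$ when $\ell=1$, which only strengthens things).

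Next I would analyze $\psi_\ell'(\gamma)=a_\ell b_\ell\gamma^{b_\ell-1}-\frac{1}{(1+\gamma)\ln 2}$. The plan is to show this has at most two zeros on $(0,\infty)$. Writing the equation $\psi_\ell'(\gamma)=0$ as $a_\ell b_\ell\ln 2\,(1+\gamma)=\gamma^{1-b_\ell}$, the left side is affine and increasing while the right side $\gamma^{1-b_\ell}$ is strictly concave (since $1-b_\ell\in(0,1)$) and increasing; a strictly concave increasing curve and a line cross at most twice. Hence $\psi_\ell'$ has at most two zeros, so $\psi_\ell$ is piecewise monotone with at most two interior turning points, which means $\psi_\ell$ can have at most three zeros on $(0,\infty)$ counted without multiplicity — but we have already exhibited exactly three, at $0$, $\gamma_\ell$, $\gamma_{\ell+1}$. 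Therefore these are \emph{all} the zeros, $\psi_\ell$ has constant sign on each of the three open intervals $(0,\gamma_\ell)$, $(\gamma_\ell,\gamma_{\ell+1})$, $(\gamma_{\ell+1},\infty)$, and consecutive intervals carry opposite signs (since at each of $\gamma_\ell,\gamma_{\ell+1}$ the zero is simple — $\psi_\ell'\neq 0$ there, because all of $\psi_\ell'$'s zeros would otherwise have to coincide with two of $0,\gamma_\ell,\gamma_{\ell+1}$, a case I would rule out or handle by a short limiting argument).

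It then remains only to pin down the sign on one interval and propagate. I would fix the sign on $(\gamma_{\ell+1},\infty)$: as $\gamma\to\infty$, $\varphi_\ell(\gamma)=a_\ell\gamma^{b_\ell}$ grows like a positive power with exponent $b_\ell<1$ while... wait — $\log_2(1+\gamma)$ grows only logarithmically, so $\varphi_\ell(\gamma)-h(\gamma)\to+\infty$, giving $\psi_\ell>0$ on $(\gamma_{\ell+1},\infty)$. Propagating the alternation backwards yields $\psi_\ell\le 0$ on $[\gamma_\ell,\gamma_{\ell+1}]$ and $\psi_\ell\ge 0$ on $[0,\gamma_\ell]$, which is exactly \eqref{eq:interval} (the inequalities become equalities precisely at the endpoints, consistent with the non-strict signs stated there, and strict in the third line as claimed).

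The main obstacle I anticipate is the sign-change / zero-counting bookkeeping: making rigorous the claim that ``at most two zeros of $\psi_\ell'$ plus three known zeros of $\psi_\ell$ forces exactly these three and forces alternation'' requires care about multiplicities and about the degenerate possibility that a zero of $\psi_\ell'$ lands exactly at $\gamma_\ell$ or $\gamma_{\ell+1}$ (a tangency). I would dispatch this either by a direct computation showing $\psi_\ell'(\gamma_\ell)$ and $\psi_\ell'(\gamma_{\ell+1})$ cannot both vanish given $\gamma_\ell<\gamma_{\ell+1}$, or more cleanly by the line-versus-strictly-concave-curve argument above, which shows the two crossing abscissae of $a_\ell b_\ell\ln2\,(1+\gamma)=\gamma^{1-b_\ell}$ are distinct and interlace $\gamma_\ell,\gamma_{\ell+1}$ in the right way by Rolle's theorem applied to $\psi_\ell$ on $[0,\gamma_\ell]$ and $[\gamma_\ell,\gamma_{\ell+1}]$. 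Everything else is routine differentiation.
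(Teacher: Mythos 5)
Your argument is correct, and it shares the paper's overall skeleton (the two curves meet only at $0$, $\gamma_\ell$, $\gamma_{\ell+1}$, then fix the sign on one interval and propagate), but you actually prove the step that the paper dismisses as ``easily verified'': that there are no other contact points. Your key lemma --- rewriting $\psi_\ell'(\gamma)=0$ as $a_\ell b_\ell\ln 2\,(1+\gamma)=\gamma^{1-b_\ell}$, a line versus a strictly concave increasing curve, hence at most two solutions, and then Rolle to cap the zeros of $\psi_\ell=\varphi_\ell-\log_2(1+\cdot)$ at three --- is exactly the missing justification, and it is sound. You also fix the sign on the unbounded interval via the power-versus-logarithm growth comparison, whereas the paper fixes the sign near the origin via $\varphi_\ell'(\gamma)\to+\infty$ as $\gamma\to 0^+$; either anchor works once alternation is established. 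The one loose end you flag, the possible tangency at $\gamma_\ell$ or $\gamma_{\ell+1}$, closes immediately by the Rolle route you mention: Rolle already places one zero of $\psi_\ell'$ strictly inside $(0,\gamma_\ell)$ and one strictly inside $(\gamma_\ell,\gamma_{\ell+1})$, so $\psi_\ell'$ vanishing at either endpoint would give a third zero, contradicting your two-zero bound; hence both zeros are simple crossings and the signs alternate as claimed (your alternative phrasing about the zeros ``coinciding with two of $0,\gamma_\ell,\gamma_{\ell+1}$'' is not quite the right statement, so use the Rolle version). Minor remark: the proposition assumes $0<\gamma_\ell$, so the $\ell=1$, $\gamma_1=0$ caveat is outside its scope (the paper handles that case separately by forcing $b_1=1$). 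Net effect: your proof is a more rigorous version of the paper's, at the cost of the extra zero-counting bookkeeping.
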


\begin{proof}
It can be easily verified that 0, $\gamma_{\ell}$, and $\gamma_{\ell+1}$ are the only contact points between a given $\varphi_\ell(\gamma)$ and $\log_2(1+\gamma)$.
Finally, notice that since $\varphi_\ell(0)=\log_2(1+0)=0$ and 
\begin{equation}\label{eq:der}
\lim_{\gamma\rightarrow0}\varphi_\ell'(\gamma) = +\infty > \lim_{\gamma\rightarrow0}\log_2(1+\gamma)',
\end{equation}
the first inequality in \eqref{eq:interval} is true in a neighborhood of the origin. 

Moreover, due to the fact that $\varphi_\ell(\gamma)$ and $\log_2(1+\gamma)$ are monotonically increasing and they intercept at  $\gamma_{\ell}$ and $\gamma_{\ell+1}$, second and third inequalities in \eqref{eq:interval} also hold. \end{proof}

\vskip 2mm
The meaning of Proposition \ref{Prop2} is depicted in Figure \ref{fig:exp_app}(a): the power-function approximation $\varphi_\ell(\gamma)$ (red line) is guaranteed to stay below the curve $\log_2(1+\gamma)$ (dashed blue line) inside the interval $[\gamma_{\ell},\gamma_{\ell+1}]$, while it is always above outside the interval. 
We also note that a direct consequence of \eqref{eq:der} is that 
\begin{equation}
    \varphi_{\ell}(\gamma) > \log_2(1+\gamma), \hspace{0.2cm} \text{as} \hspace{0.2cm}\gamma\rightarrow0.
\end{equation}
Therefore, the function $\varphi_{1}(\gamma)$ used to approximate the users' throughput around the origin, where $\gamma_{1}=0$ and $\gamma_{2}>0$, must be linear, i.e, $b_1 = 1$, since we cannot overestimate the throughput capacity of a user
(also note that \eqref{eq:ab} would not have a solution for $\gamma_\ell=0$). The behavior of the linear function $\varphi_1(\gamma)$ is show in Figure \ref{fig:exp_app} (b).

\begin{figure}[tb]
\centering
\subfloat[]{\includegraphics[width=0.8\columnwidth]{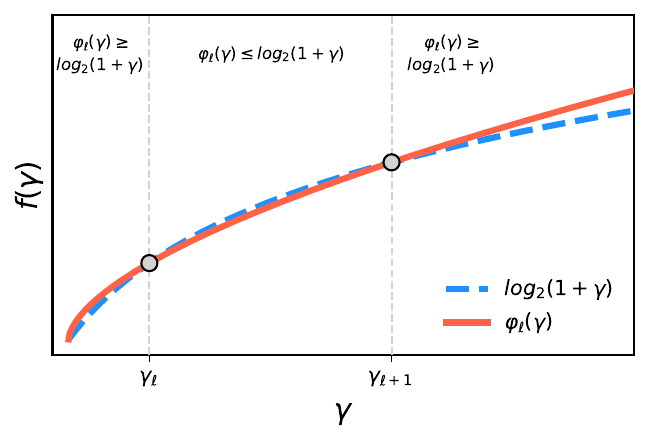}}\\
\subfloat[]{\includegraphics[width=0.8\columnwidth]{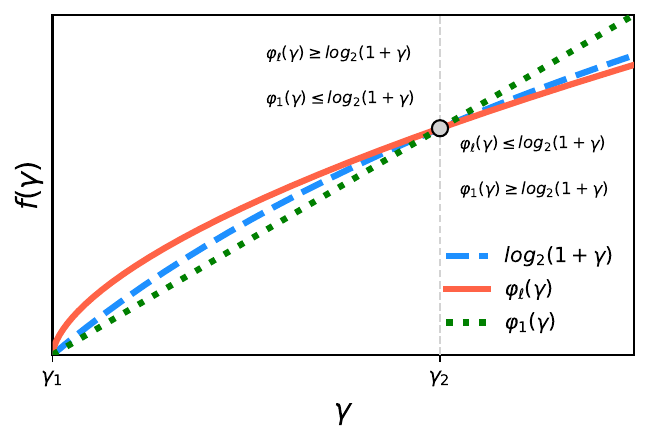}}
\caption{Visual representation of (a) inequalities in \eqref{eq:interval} and (b) requirement of a linear $\varphi_{1}(\gamma)$.} 
\label{fig:exp_app}
\vspace*{-8pt}
\end{figure}

Figure \ref{fig:des} (a)  shows the impact of the hyper-parameter $m$, i.e.\ the number of subintervals of the piecewise approximation. It is clear that $m=2$ leads to a very conservative approximation, since its respective functions strongly underestimate the original one, which could make the problem infeasible. On the contrary, already $m=5$ generates a far better approximation
compared to both $m=2$ and $\log_2(1+\gamma) \simeq \log_2(\gamma)$. 
Clearly, the approximation becomes increasingly better as $m$ increases. The impact of the choice of $m$ is further discussed in Remark \ref{rem1}.
Figure \ref{fig:des} (b) shows the intervals where each of the functions are active. Furthermore, one can see that if the active constraint is respected for a given $\gamma$, the inactive ones are simultaneously respected, since they are always overestimated. As a consequence, solving the optimization problem considering only the active function and its respective interval is equivalent to solving for all $ \varphi_{\ell}(\gamma)$ with $ \ell\in [m]$ simultaneously regardless the value of $\gamma$. Note also that the first function is linear. 
\begin{figure}[tb]
    \centering
    \subfloat[]{\includegraphics[width=0.5\columnwidth, trim={0 0 0 5pt}, clip]{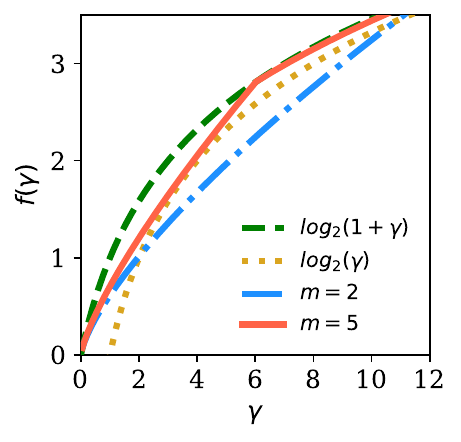}}
    \subfloat[]{\includegraphics[width=0.5\columnwidth, trim={0 0 0 5pt}, clip]{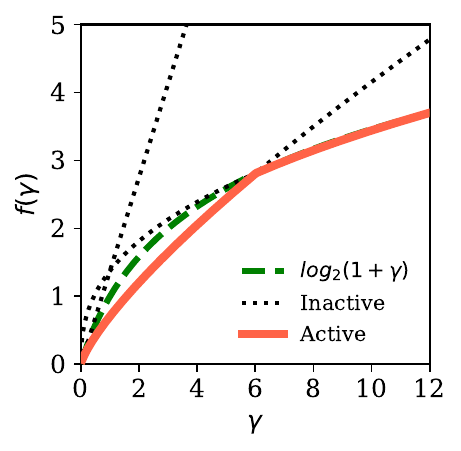}}    
    \caption{(a) Impact of $m$ and (b) constraints active intervals.}
    \label{fig:des}
\end{figure}
\hspace{1cm}\begin{remark}[Interval length $\Tilde{\gamma}$]\label{rem1} The proposed approach is based on PPF approximations inside a given interval $[0, \Tilde{\gamma}]$.
The value $\Tilde{\gamma}$ represents an upper bound on the SINR, and can always be found given bounds on the individual powers $P_j$.
In practice, $\Tilde{\gamma}$ may be considered as a design parameter; to this end, we note that, if $\Tilde{\gamma}$ is very small, the problem may become infeasible since the set of possible solutions would be very conservative. Conversely, a large $\Tilde{\gamma}$ would require a high value of $m$, due to the aforementioned reasons. Despite the fact this might not affect the optimal value (considering a proper approximation), it increases the computational cost since each $\ell\in [m]$ represents a constraint in the optimization problem. For each new function approximation, i.e new $\ell$, $n$ more constraints must be satisfied, since it is related to all users throughput levels, increasing the computation time.
Therefore, $\Tilde{\gamma}$ should be set properly, so the problem is feasible and $m$ is not very large. Note that in case of an infeasible problem (which could happen by not respecting the constraints or the interval $[0, \Tilde{\gamma}]$), both $m$ and $\Tilde{\gamma}$ can be increased. 
In the simulations sections we show how $m$ may be chosen to trade-off optimality and computational complexity.

Finally, we point out that the active interval selection for each $\varphi_\ell(\gamma)$ does not need to follow a specific rule. For instance, one could just divide the approximation interval into $m$ ones of equal length. 
\end{remark}


The importance of the proposed approximation relies in the fact it allows to reformulate our original problem in convex form. This is shown in the following theorem, which represents the main technical result of the paper.

\begin{theorem}
Given $m$ subintervals between $0$ and $\Tilde{\gamma}$, define $m$ approximating functions  as follows
\begin{equation}
    \hat{f_\ell}(q_j,u_{ij}) \doteq \log \left(\frac{\sigma^{2}}{g_{ij}}e^{-q_j-\frac{u_{ij}}{b_\ell}}+\sum_{k \neq j} \frac{g_{ik}}{g_{ij}}e^{q_k-q_j-\frac{u_{ij}}{b_\ell}}\right),
\end{equation}
for $\ell\in[m]$. Then, 
constraint in \eqref{eq:r2} is implied by the following set of  (possibly more conservative) $m$ convex constraints
\begin{equation}\label{eq:fc}
\hat{f_\ell}(q_j,u_{ij}) \leq \frac{\log\left(\frac{B_ja_\ell}{t_i}\right)}{b_\ell} + M \bar z_{ij}, \quad  \ell\in[m], i\in[n], j\in[N].
\end{equation}
Moreover, the conservatism of this approximation vanishes for $m\to\infty$.
\end{theorem}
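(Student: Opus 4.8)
The plan is to prove the statement in three steps: (i) an exact rewriting of the big‑M form of \eqref{eq:r2} in logarithmic variables; (ii) the implication \eqref{eq:fc}$\Rightarrow$\eqref{eq:r2}, obtained by combining Lemma~\ref{lemma:bigM} with Proposition~\ref{Prop2}; and (iii) the vanishing of the conservatism as $m\to\infty$. For step (i) I would introduce $q_j=\log P_j$ (so $P_j=e^{q_j}$, as already done in Lemma~\ref{lemma:bigM}, where it was written $y_j$) and $u_{ij}=\log x_{ij}$. Substituting $P_j=e^{q_j}$ in \eqref{SINR} and pulling the common factor $e^{-u_{ij}/b_\ell}$ out of the logarithm, a one‑line computation gives $(\sigma^2/g_{ij})e^{-q_j}+\sum_{k\neq j}(g_{ik}/g_{ij})e^{q_k-q_j}=1/S_{ij}(\mathcal{P})$, hence $\hat f_\ell(q_j,u_{ij})=-u_{ij}/b_\ell-\log S_{ij}(\mathcal{P})$. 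Multiplying \eqref{eq:fc} by $-b_\ell<0$ and exponentiating turns it into $x_{ij}B_j\varphi_\ell(S_{ij}(\mathcal{P}))\ge t_i e^{-b_\ell M\bar{z}_{ij}}$, which for $M$ large reads $x_{ij}B_j\varphi_\ell(S_{ij})\ge t_i$ when $\bar{z}_{ij}=0$ and is vacuous when $\bar{z}_{ij}=1$ (using $x_{ij}B_j\varphi_\ell(S_{ij})\ge0$; implicitly the $x_{ij}$ are bounded away from $0$, or the constraint for an unassigned pair is simply dropped). Thus \eqref{eq:fc} is precisely the big‑M encoding of the throughput requirement with $\log_2(1+\cdot)$ replaced by the power surrogate $\varphi_\ell$, and since $\hat f_\ell$ is a log‑sum‑exp of functions affine in $(q,u)$ and the right‑hand side is affine in $\bar{z}_{ij}$, each of the $m$ constraints is convex.

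For step (ii) I would fix any point satisfying all $m$ constraints \eqref{eq:fc}. By Remark~\ref{rem1} the SINR obeys $S_{ij}(\mathcal{P})\in[0,\tilde\gamma]$, so $S_{ij}(\mathcal{P})\in[\gamma_{\ell^\star},\gamma_{\ell^\star+1}]$ for some index $\ell^\star$, and there Proposition~\ref{Prop2} gives $\varphi_{\ell^\star}(S_{ij})\le\log_2(1+S_{ij})$. Multiplying this inequality by $x_{ij}B_j\ge0$ and chaining with the $\ell=\ell^\star$ instance of the reformulated \eqref{eq:fc} yields $f(x_{ij},y)=x_{ij}B_j\log_2(1+S_{ij})\ge x_{ij}B_j\varphi_{\ell^\star}(S_{ij})\ge t_i$ when $\bar{z}_{ij}=0$, and the trivial inequality when $\bar{z}_{ij}=1$; together with \eqref{eq:trick2} this is exactly the hypothesis of Lemma~\ref{lemma:bigM}, whose conclusion is \eqref{eq:r2}. (The constraints with $\ell\neq\ell^\star$ then hold automatically, because $\varphi_\ell\ge\log_2(1+\cdot)$ outside $[\gamma_\ell,\gamma_{\ell+1}]$; this also explains why imposing all $m$ inequalities is equivalent to imposing only the active one.) The qualifier ``possibly more conservative'' is genuine: since $\varphi_{\ell^\star}\le\log_2(1+\cdot)$ strictly on the interior of the active interval, \eqref{eq:fc} can exclude points that satisfy \eqref{eq:r2}.

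For step (iii) I would refine the partition so that $\max_\ell(\gamma_{\ell+1}-\gamma_\ell)\to0$ as $m\to\infty$ and show that the piecewise interpolant converges uniformly to $h(\gamma)\doteq\log_2(1+\gamma)$ on $[0,\tilde\gamma]$: on each $[\gamma_\ell,\gamma_{\ell+1}]$ both $\varphi_\ell$ and $h$ are increasing (by the monotonicity already established for the functions $\varphi_\ell$) and coincide at the two endpoints, so $|\varphi_\ell(\gamma)-h(\gamma)|\le h(\gamma_{\ell+1})-h(\gamma_\ell)$ throughout that subinterval, and by uniform continuity of $h$ this bound tends to $0$ uniformly in $\ell$ (the linear first piece $\varphi_1$ on $[0,\gamma_2]$ is handled the same way). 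Uniform convergence then makes the feasible region defined by \eqref{eq:fc} fill up that of \eqref{eq:r2} from inside, so, provided a strictly feasible point exists, the optimal value of the surrogate program converges to that of \eqref{eq:Opt_Problem}. I expect this last step to be the main obstacle: steps (i)--(ii) are essentially bookkeeping once the variable change is spotted, whereas turning ``$\varphi_\ell\to h$ uniformly'' into ``the optimal value converges'' requires a stability argument (e.g.\ a Slater‑type condition) ruling out a loss of feasibility in the limit, and one must also carry the standing assumption $S_{ij}\le\tilde\gamma$ and the exceptional linear first piece through the whole argument.
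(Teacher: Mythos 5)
Your proposal is correct, and its computational core is the same as the paper's: the change of variables $P_j=e^{q_j}$, $x_{ij}=e^{u_{ij}}$, the identity $\hat f_\ell(q_j,u_{ij})=-u_{ij}/b_\ell-\log S_{ij}(\mathcal{P})$ obtained by factoring out $e^{-u_{ij}/b_\ell}$, and the observation that the resulting constraint is log-sum-exp and hence convex are exactly the manipulations in the paper's proof. Where you differ is in completeness rather than in route: the paper's proof stops at the algebraic reformulation (writing $r_{ij}\simeq x_{ij}B_j a_\ell S_{ij}^{b_\ell}\ge t_i$ and rearranging), leaving the actual implication toward \eqref{eq:r2} and the vanishing-conservatism claim implicit, whereas you make both explicit --- step (ii) correctly chains the active-interval lower bound $\varphi_{\ell^\star}\le\log_2(1+\cdot)$ from Proposition~\ref{Prop2} with the big-M equivalence of Lemma~\ref{lemma:bigM}, and step (iii) supplies the uniform-convergence bound $|\varphi_\ell(\gamma)-h(\gamma)|\le h(\gamma_{\ell+1})-h(\gamma_\ell)$ plus a Slater-type stability argument for convergence of optimal values, none of which appears in the paper. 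Your side remark that the additive big-M in the log domain becomes a multiplicative relaxation $t_i e^{-b_\ell M\bar z_{ij}}$ (so the constraint for an unassigned pair is only asymptotically, not exactly, inactive) is also a fair point that the paper glosses over. In short: same approach, but your write-up closes gaps the published proof leaves open, particularly the $m\to\infty$ statement.
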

\begin{proof}
By using the proposed approximation, inequality~\eqref{eq:r2} can be written as
\begin{equation}\label{SINR_app}
    r_{ij} \simeq x_{ij} B_j a_\ell\left(\frac{P_jg_{ij}}{\sigma^{2} + \sum_{k \neq j}P_k g_{ik}}\right)^{b_\ell} \geq t_i, \quad \ell\in [m].
\end{equation}
Then, with some mathematical manipulations and taking the $\log$ of both sides, we have
\begin{equation*}
    a_\ell\left(\frac{\sigma^{2}}{g_{ij}}P_{j}^{-1}x_{ij}^{-\frac{1}{b_\ell}}+\sum_{k \neq j} \frac{g_{ik}}{g_{ij}}P_k P_{j}^{-1}x_{ij}^{-\frac{1}{b_\ell}}\right)^{-b} \geq \frac{t_i}{B_j},
\end{equation*}
\begin{equation*}
    -\log \left(\frac{\sigma^{2}}{g_{ij}}P_{j}^{-1}x_{ij}^{-\frac{1}{b_\ell}}+\sum_{k \neq j} \frac{g_{ik}}{g_{ij}}P_k P_{j}^{-1}x_{ij}^{-\frac{1}{b_\ell}}\right) \geq \frac{\log\left(\frac{t_i}{B_ja_\ell}\right)}{b_\ell}.
\end{equation*}
With the variables transformation $P_j = e^{q_j}$ and $x_{ij} = e^{u_{ij}}$, it is possible to write
\begin{equation*}
    -\log \left(\frac{\sigma^{2}}{g_{ij}}e^{-q_j-\frac{u_{ij}}{b_\ell}}+\sum_{k \neq j} \frac{g_{ik}}{g_{ij}}e^{q_k-q_j-\frac{u_{ij}}{b_\ell}}\right) \geq \frac{\log\left(\frac{t_i}{B_ja_\ell}\right)}{b_\ell},
\end{equation*}
then we have 
\begin{equation}\label{eq:cvc}
\begin{aligned}
    \hat{f_\ell}(q_j,u_{ij}) \leq  \frac{\log\left(\frac{B_ja_\ell}{t_i}\right)}{b_\ell},
\end{aligned}
\end{equation}
which is constraint in equation \eqref{eq:fc} before the Big-M Method. It should be noticed that constraint in \eqref{eq:cvc} is a \textit{log-sum-exp} function, which is convex.
\end{proof}

\subsection{MIGP optimization problem}
The results of the previous section allow us to formulate our main optimization problem. To this end, let $\Tilde{P_{j}}$ be the maximum transmission power of a RB in BS $j$, defined as the ratio between total transmission power of a BS and its number of RBs. Hence, a Mixed-Integer Geometric Programming (MIGP) problem that solves optimization problem \eqref{eq:Opt_Problem} is given by
\begin{equation}\label{min_GP}
\begin{aligned}
 \min_{\bar z_{ij}, u_{ij}, q_j} \quad & \sum_{j=1}^{N} e^{q_j}\\
\text{s.t.: } & e^{q_j}  \leq \Tilde{P_{j}}, \quad j\in[N],\\
  & e^{u_{ij}} \leq 1, \quad  i\in[n], j\in[N]\\
  & \sum_{i=1}^{n} e^{u_{ij}} \leq 1, \quad  j\in[N],\\ 
  & \hat{f_\ell}(q_j,u_{ij}) \leq \frac{\log\left(\frac{B_ja_\ell}{t_i}\right)}{b_\ell} + M \bar z_{ij}, \quad \ell\in[m], \\
  & \hspace{5cm}i\in[n], j\in[N]\\ 
  &  \bar z_{ij} \in \{0,1\}, \quad i\in[n], j\in[N], \\
  & \sum_{j=1}^N \bar z_{ij} = N-1, \quad i\in[n].
\end{aligned}
\end{equation}


Note that the above problem is a mixed-integer one (since it contains the binary variables $\bar z_{ij}$) which solves simultaneously association, assignment, and the power optimization problems. It should be noted that, for given $\bar z_{ij}$, i.e.\ for given association, the problem becomes a classical Geometric Program, i.e.\ a convex optimization problem for which a global optimal solution can be found in polynomial-time, as discussed in the next section. This is an important remark since, as we will see in the application section, there are many practical situations in which simple assignment rules appear to work rather efficiently. In such cases, our approach hence becomes even faster and more scalable. When $\bar z_{ij}$ are problem decision variables, only sub-optimal results are guaranteed. A more detailed discussion on the complexity of the MIGP approach is discussed next.

\subsection{On the complexity of the MIGP algorithm}
\label{sec-complexity}
Geometric Programming, once transformed into its log-sum-exp representation, can be globally solved in polynomial time using standard barrier-based interior-point methods for convex programming. For $n_v$ variables and $n_c$ inequality constraints, $\epsilon$ convergence can be reached in a number of outer (centering) iterations essentially proportional to $\log(n_c/\epsilon)$, whereby each outer iteration requires a number of inner (Newton) iterations that depend on various parameters (Lipschitz constants of gradients and Hessian) but scales very mildly as a double log in the precision, i.e. as $\log\log(c/\epsilon)$, with $c$ a given constant. For the Mixed-Integer GP, a locally optimal solution can be found with branch \& bound methods, which in a worst-case scenario, increases the computational effort exponentially with the problem size. The reader is referred to \cite{Boyd2007} for additional discussions. On the other hand, it should be remarked that the literature presents several (possibly heuristic) methods with low complexity for selecting these integer parameters. Such methods can be surely adapted to the problem at hand.}

\subsection{Sub-carrier assignments given $x$}
\label{sec-subcarrier}

In OFDMA networks, the wide bandwidth transmitted signal is divided into many orthogonal narrow band sub-carriers, such that they do not interfere with each other. Sets of these sub-carriers form a resource block, which can be allocated to the users according to their throughput requirements (represented by the variables $x_{ij} = [0,1]$). Therefore, after solving the aforementioned optimization problem, we still need to assign $\rho_{ij}$ RBs to user $i$ connected to BS $j$ given the respective~$x_{ij}$ computed via optimization problem \eqref{min_GP}. 

Since each RB is allocated exclusively to one user at a given instant, $\rho_{ij}$ should clearly be an integer variable. Ideally, if a user requires a non-integer $\rho_{ij}$, one could just round such a value to its smallest greater integer (ceil function). In practice this is not possible, since the number of RBs is limited and the ceil rounding of all $\rho_{ij}$ would lead to more RBs than the BS can provide. Instead, we propose to solve the optimization problem where the users' throughput requirements and the BSs available working bandwidth are slightly increased and decreased, respectively. As a consequence, some of the users would still respect the throughput requirements even if $\rho_{ij}$ is rounded to its greatest smaller integer (floor function) and some RBs could be allocated to those who do not satisfy such a condition. The procedure to obtain $\rho_{ij}$ given $x_{ij}$ is presented in Algorithm \ref{alg:cap}, with
\begin{itemize}
    \item $\delta_1$: parameter to increase users' throughput requirements;
    \item $\delta_2$: parameter to decrease BS working bandwidth;
    \item $B_0$: bandwidth of one RB.
\end{itemize}

\begin{algorithm}[h!]
\caption{RB assignemtns given $x_{ij}$.}\label{alg:cap}
\begin{algorithmic}
\STATE 
\STATE \textbf{Increase/decrease throughput/working bandwidth:}
\STATE \hspace{0.25cm} $\hat{t}_i \gets (1+\delta_1)t_i$, \quad $i\in[n]$
\STATE \hspace{0.25cm} $\sum_{i=1}^{n} e^{u_{ij}} \leq 1-\delta_2$, \quad $ j\in[N]$
\STATE 
\STATE \textbf{Solve MIGP in \eqref{min_GP}}
\STATE
\STATE \textbf{for} \hspace{0.2cm} {$i\in[n]$}
\STATE \hspace{0.25cm} $j \gets \arg \max (X(i,:))$
\STATE \hspace{0.25cm} $\rho_{ij} \gets \frac{\max (X(i,:))B}{B_0}$
\STATE \hspace{0.25cm} \textbf{if} \hspace{0.2cm} {$\lfloor \rho_{ij} \rfloor B_0 \log_2(1+S_{ij}(y)) \geq t_i$}
\STATE \hspace{0.75cm} $\rho_{ij} \gets \lfloor \rho_{ij} \rfloor$
\hspace{0.5cm} 
\STATE \hspace{0.25cm} \textbf{else}
\STATE \hspace{0.75cm} $\rho_{ij} \gets \lceil \rho_{ij} \rceil$
\STATE \hspace{0.25cm} \textbf{end if}
\STATE
\textbf{end for}
\end{algorithmic}
\label{algorithm2}
\end{algorithm}

Note that one needs to certify that the number of RBs assigned to the users does not exceed the maximum available. In case such a constraint is not respected, $\delta_1$ and $\delta_2$ can be increased. Practically, this means that more users respect the throughput requirement with $\rho_{ij} \gets \lfloor \rho_{ij} \rfloor$ and more RBs are available to users that must use the ceil function. Increasing these parameters can lead to higher transmission power consumption, since they are responsible for increasing throughput demands ($\delta_1$) and decreasing the available bandwidth ($\delta_2$). In extreme cases, the optimization problem might even become infeasible. Consequently, they should be as small as possible.

\section{Application use cases}
\label{sec:example}

We demonstrate the applicability of the proposed algorithm in  few HetNet deployments, based on the real-world configuration of a production-grade RAN. 
To this end, we first outline the process we followed to generate faithful HetNet deployment replicas of an MNO in a major European city. 
Next, we present how MIGP enables energy-efficient resource allocation in a HetNet, benchmarking its performance against other available solutions. 
Finally, we probe the scalability of  MIGP and of the benchmarks, as the complexity of the HetNet, the number of users, and their traffic demands increase.


\subsection{Realistic scenario generation}
\label{sec:data_gen}

We generate our realistic scenarios based on the data obtained from the production network of a leading MNO in a European capital city. The data provided includes the location of the transmitting BS and the configuration of their antennas, \ie the antenna type (macro or micro cell) and steering direction, the technology for that deployment (2G, 3G, 4G, or 5G NSA), and the corresponding statistics about session-level traffic demands. 
This information allows generating a high-fidelity digital twin of the actual network that enables dependable assessments under realistic conditions.

For each scenario, $S$, abiding by the operational network topology we choose a set of base stations, $BS_{j}$, composed of one or more antennas, and we measure the number of sessions arriving on a 1-minute interval of an average busy work day. 
This interval represents a moment where the network operates at higher loads, with the number of connected users and their traffic demands reaching their peak values.
We do not access individual users data in order to comply to privacy rules of the country where the network is located; instead, our data is aggregated at BS level with 1 minute granularity. This means we cannot simply replicate the session-level values, but need to rely on statistics from the measured values. 

For each scenario $S$, we estimate the number of session arriving at each $BS_{j}$ during the time interval of a busy work day. 
From this, we calculate the shares of sessions belonging to each mobile applications, obtaining a heterogeneity of session demands in each scenario, since each mobile service has its own characteristic traffic load. 
For each expected session arriving at $BS_{j}$, we estimate traffic and duration (and thereby throughput) according to the demands of the corresponding mobile service. These demands are calculated based on the statistics collected at BS level across the network over multiple months, which allows us to reliably estimate the session-level traffic probability distribution function of each mobile service, which we use to generate samples of traffic consumed by each session. In this regard, this methodology not only respects the architecture of a major operational network but also the expected density and spatial distribution of users, while complying with user-privacy rules. 

Each generated session is assigned to an individual user placed in a random location, lying within the covered area of $s$-th HetNet topology, determined initially by a Poisson random point process but with additional conditions to respect the boundaries of the street layout of the topology (as to not allocate too many users unrealistically inside buildings). 
We note that while  looking at the traffic and solving the problem for each $BS_{j}$, we consider the full coverage area attributed to all 5G NSA-enabled antennas comprising each $BS_{j}$. 

To evaluate the received signal distribution within the coverage area of each $BS_{j}$ we employ a ray tracing simulator which allows  us to carefully emulate  the electromagnetic wave propagation in the urban fabric \cite{RT_5G_Beyond, Prop_Models_Survey}. That  enables us to quantify the interaction of the waves with objects in the propagation environment and to have accurate estimates of the channel gains $g_{i,j}$ required to evaluate equation (\ref{SINR}). To conduct the site-specific ray tracing simulations for each HetNet topology, we fetch from Open Street Maps information related to the urban environment, such as the city layout and building height, the terrain type, and the foliage.  Then, we place antennas and configure their properties according to the specifications provided by the MNO. A HetNet topology with 1 macro ($BS_{3}$) and 4 micro BSs is shown in Figure \ref{fig:channel_gains}, depicting how the signal emitted by each BS propagates within the urban fabric, and how the micro BSs compensate for the low signal level of the macro cell at larger distances. It is worth mentioning that macro cell $BS_{3}$ is composed of 3 directional antennas, while the remaining micro cells contain a single antenna with an omnidirectional radiation pattern. 

In total, we consider 5 scenarios to evaluate the optimization algorithms, each one assuming different spatial and network complexities. 
Scenario $S_{1}$ is a \textit{downtown} location that will be used in our comparative analysis in Section \ref{sec:benchmark} and \ref{sec:approaches}, containing 1 macro cell and 4 micro cells located in the historical center in the downtown area of the city under consideration. 
The remaining 4 scenarios are utilized in the scalability analysis in Section \ref{sec:scalability}, where we expound on the resilience of MIGP at a variety of different HetNet topologies spread throughout the city. 
Scenarios $S_{2}$ and $S_{3}$ build on the HetNet operating at the same downtown region of $S_{1}$, but we increase the number of micro BS to 7 and 8, respectively, to evaluate the scalability of our solution in denser network configurations. 
Scenario $S_{4}$ represents a less dense HetNet topology located by the \textit{river} side of the city, consisting of  1 macro and 2 micro cells. 
Finally, scenario $S_{5}$ is on an area at the \textit{hill} side of the city, 
composed of 3 macro and 2 micro cells.
All scenarios consider BSs that are 5G NSA-enabled. 

The MIGP depends on the hyper-parameters presented in Table \ref{tab:table_hyper}. The utility of each one was discussed on previous sections. The values of $a_\ell$ and $b_\ell$ allow a piecewise concave approximation of $\log_2 (1+S_{ij}(\mathcal{P}))$ when $S_{ij}(\mathcal{P}) \in [0, \hspace{0.1cm} 513.85]$. Despite the fact there is no closed-form for setting $\delta_1$ and $\delta_2$, we did so by gradually increasing both and checking if the solution to the largest scenario (800 users) could respect the maximum number of RBs (500/BS). Then, the parameters that provided a feasible solution to the largest scenario ($\delta_1$ = 0.05 and $\delta_2 = 0.16$) were also used for all others. $\delta_2 >\delta_1$ because by experimental results, we verified that the $\delta_2$ has more influence on providing a solution inside the physical network limitation on the number of RBs.

\begin{table}[tb]
  \begin{center}
    \caption{Optimization problem hyper-parameters.}
    \label{tab:table_hyper}
    \begin{tabular}{c|c} 
      Hyper-parameter & Value  \\
      \hline \hline
      $\delta_1$ & 0.05 \\
      $\delta_2$ & 0.16 \\
      M &  $10^6$  \\
      $a_\ell$,  $\ell\in [m]$ & [1.408, 0.7330, 1.3150, 1.9061, 3.1232] \\
      $b_\ell$,  $\ell\in [m]$ & [1, 0.7821, 0.4201, 0.2589, 0.1697] \\
      \hline
    \end{tabular}
  \end{center}
\end{table}

\subsection{Qualitative performance analysis}
\label{sec:benchmark}
Initially, we demonstrate the applicability of the proposed framework to a single HetNet deployment of scenario $S_{1}$, benchmarking the performance of MIGP  with other available approaches typically employed for user allocation. To this end, we consider a 5G new radio (NR) access network  operating at 3.5 GHz with 500 resource blocks, utilizing an OFDMA technique with 100 MHz of bandwidth. As previously mentioned, $S_{1}$ consists of 1 macro and 4 micro BSs that accommodate the throughput requirements of 400 users, bounded between 1.35 Kbps and 18.72 Mbps. Pushing for greener communications, via MIGP we aspire to minimize the total transmission power of the HetNet. The noise power is set to -174 dBm/Hz.
\begin{figure}[tb]
\centering
\includegraphics[width = \columnwidth]{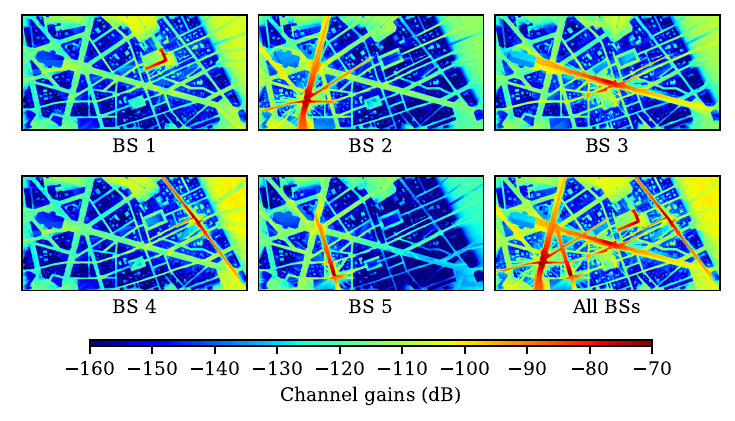}
\vspace*{-8pt}
\caption{Channel gains provided by BSs and the full scenario for $S_{1}$.} 
\label{fig:channel_gains}
\vspace*{-8pt}
\end{figure}
\begin{figure}[tb]
\centering
\includegraphics[width = 0.9\columnwidth]{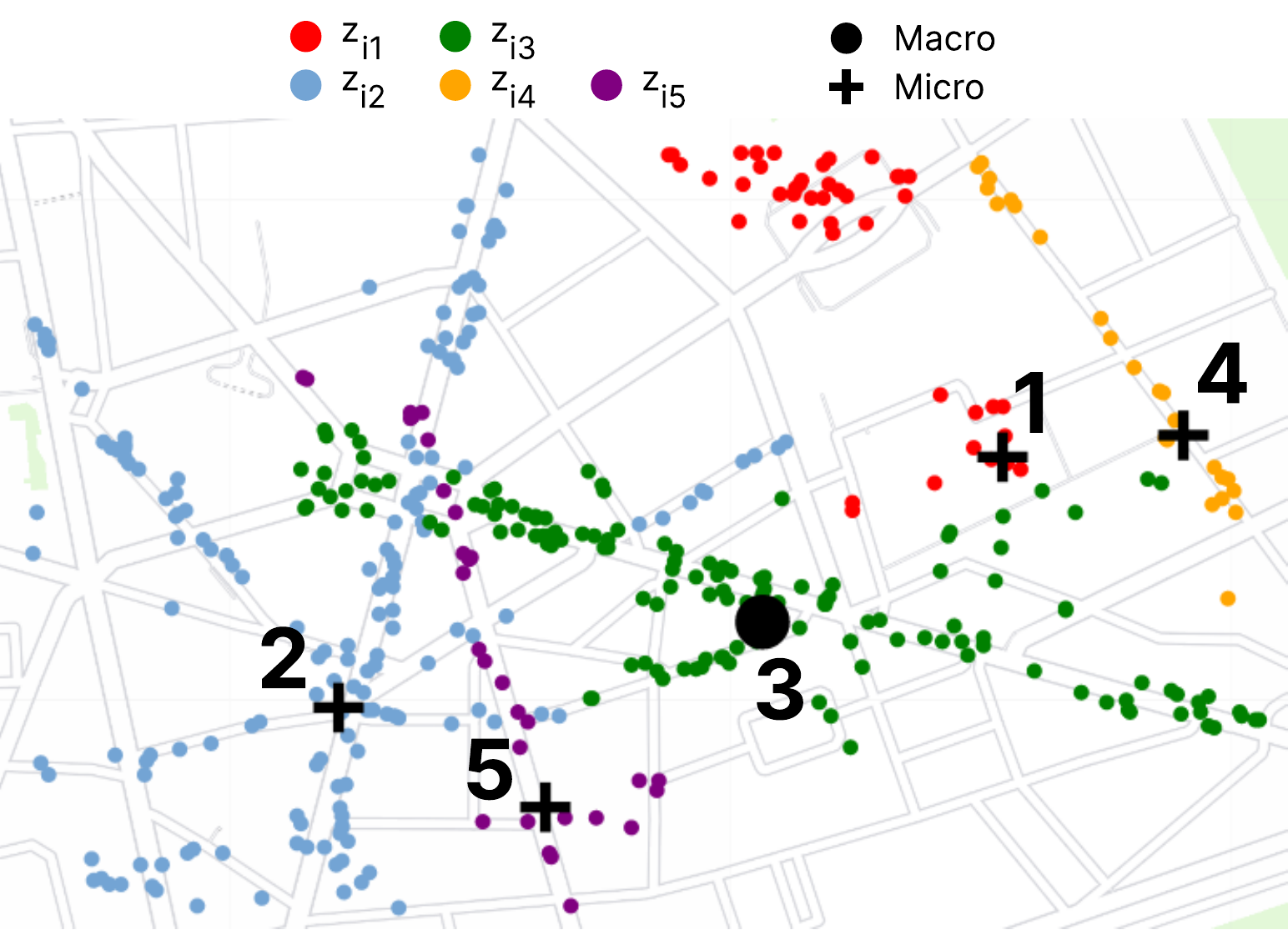}
\caption{
Users' assignment after MIGP optimization for $S_{1}$.
}
\label{fig:assignement}
\end{figure}

The resulting assignment, Figure \ref{fig:assignement}, shows a clear tendency of connecting the users to the BS providing the highest channel gain. We also note on the map a pattern of users being mostly around the outside of buildings, which is amplified by the propagation loss experienced by 3.5MHz communication inside buildings. As previously explained, the working bandwidth variables in $x$ are relaxed to a continuous domain, but in practice we need integer numbers $\rho_{ij}$ representing the number of resource blocks made available to a user. Therefore, after solving the MIGP, such a conversion is done by using Algorithm \ref{alg:cap}. The number of users $n_j$ connected to BS $j$, the number of RB used in each BS, and the respective transmission powers are provided in Table \ref{tab:UA-RB}. $P_j^{\text{Tot}}$ stands for the total transmission power of BS $j$ (each RB consuming $P_j$ Watts). 

\begin{table}[tb]
\begin{center}
\caption{Number of users connected to each BS for $S_{1}$.}
\begin{tabular}{lccc|c}
\multicolumn{1}{c|}{BS $j$} & $n_j$ & $\text{RB}_j$ & $P_j$ (W) & \multicolumn{1}{|c}{$P_j^{\text{Tot}}$ (W)}\\ \hline \hline
\multicolumn{1}{c|}{$j = 1$} & 43 & 423 & $0.2894 \times 10^{-3}$ & 0.1224  \\
\multicolumn{1}{c|}{$j = 2$} &161 & 495 & $0.7117 \times 10^{-3}$ & 0.3523 \\
\multicolumn{1}{c|}{$j = 3$} &142 & 490 & $0.6535 \times 10^{-3}$ & 0.3202 \\
\multicolumn{1}{c|}{$j = 4$} & 24 & 413 & $0.0038 \times 10^{-3}$ & 1.5694 $\times 10^{-3}$\\
\multicolumn{1}{c|}{$j = 5$} & 30 & 435 & $0.2151 \times 10^{-3}$ & 0.0936 \\ \hline
\multicolumn{1}{c|}{$\sum_{j=1}^N $} & 400 & 2256 & 0.0019 &  0.8901   \\ \hline
\label{tab:UA-RB}
\end{tabular}
\end{center}
\vspace*{-24pt}
\end{table}

\subsection{Comparison with global optimization approaches}
\label{sec:approaches}

Since our proposed approach deals simultaneously with the variables $x, z$ and $\mathcal{P}$, we can compare it with some standard procedures performed in realistic network scenarios. As described in \cite{Foolad2013}, the users' association can be done with simple rules, such as \textit{Received Signal Power} or \textit{Maximum channel gain}, defined below.

\begin{itemize}
    \item Received Signal Power (RSP): a user $i$ is associated to the base station $j$ that maximizes the product between transmission power and channel gain, i.e., $j = \arg \max_j(g_{ij}P_j)$.

    \item Maximum channel gain (MCG): a user $i$ is associated to the base station $j$ that maximizes the channel gain between $i$ and $j$, that is, $j = \arg \max_j(g_{ij})$.
\end{itemize}

Note that these simple association rules provide a matrix $z$. Therefore our problem becomes a geometric program (GP), which is convex. In Figure \ref{fig:UE}, we used the MCG rule and refer to this situation as (GP + fixed $z_{ij}$). For $S_{1}$, where each base station $j$ have $n_j$ users assigned via association rule, the resources can be equally shared between the UE according to  
   \begin{equation*}
      x_{ij} = \begin{cases}
      \frac{1}{n_j}, \hspace{0.1cm} \text{if} \hspace{0.2cm} z_{ij}=1, \hspace{0.2cm} \forall i\in[n], \hspace{0.2cm} \forall j\in[N]\\
      \text{0, otherwise.}\\
      \end{cases}
  \end{equation*}
This situation is referred as (GP + fixed $z_{ij}, x_{ij}$). We also compare our approach with its early stage (without Big-M trick, piecewise concave approximation, and variables transformation) given by optimization problem \eqref{eq:Opt_Problem}. Since this is highly non-convex, finding its optimal solution is not trivial and in general just local optimal is achieved. To deal with this problem, we adopt a general global optimization approach. In particular, we compare our MIGP approah with one of the state-of-the-art commercial software for global optimization, 
currently utilized by main players in network optimization
use: the package MIDACO (Mixed Integer Distributed Ant Colony Optimization) solver is able to solve mixed integer non-linear programs with discontinuities, non-convexity, and stochastic noise.  
More details on MIDACO are given in \cite{SCHLUETER2013}.

In Figure \ref{fig:UE}, the optimal value considering the situations described above is calculated for different numbers of users connected to the network during $S_{1}$, keeping the \% of sessions across mobile applications consistent to respect traffic dynamics. The corresponding KWh consumption if the power levels are kept constant for our network snapshot are also shown. The number of users associated to each BS is in Figure \ref{fig:number_of_users}.  

\begin{figure}
    \centering
    \includegraphics[width=1\columnwidth, trim={0 10pt 0 0}, clip]{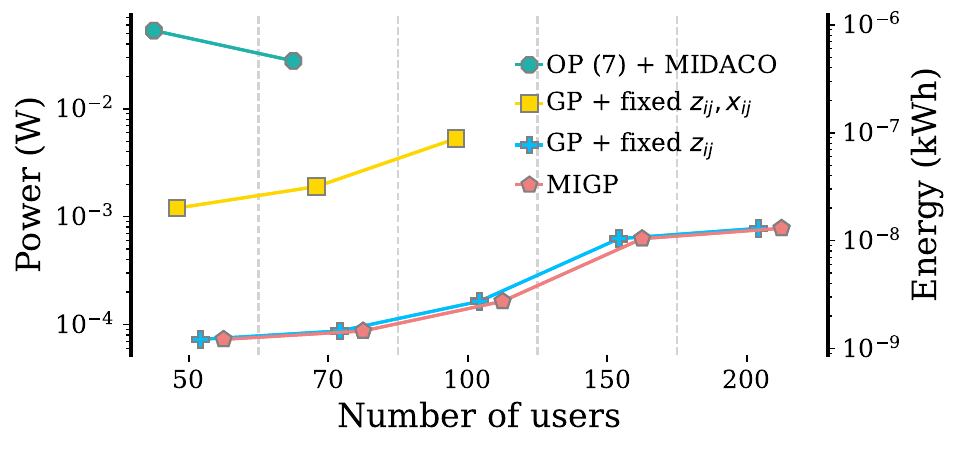}
    \caption{Optimal value as a function of number of users in the network for different approaches for $S_{1}$.}
    \label{fig:UE}
    \vspace*{-4pt}
\end{figure}

\begin{figure}[]
    \centering
    \includegraphics[width = 1\columnwidth, trim={0 5pt 0 0}, clip]{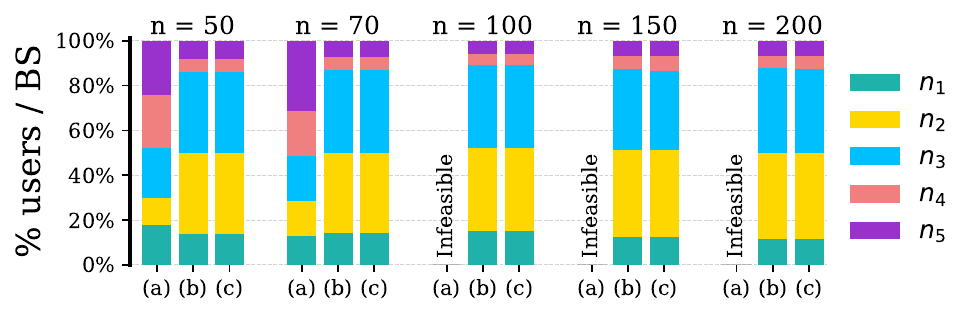}
    \caption{Number of users connected to each BS of $S_{1}$ for (a) OP(7) + MIDACO, (b) GP + fixed $z_{ij}$ and (c) MIGP.}
    \label{fig:number_of_users}
    \vspace*{-4pt}
\end{figure}

From these results, a series of keypoints can be highlighted.
\begin{enumerate}

    \item \textbf{GP+$\text{fixed}$} \boldmath{$z_{ij}$} $\times$ \textbf{MIGP}: 
    the assignments and the corresponding optimal values obtained with MIGP and (GP + \text{fixed} \boldmath{$z_{ij}$}) are almost the same for this network. This happens because the BSs are directional and without large cross-interference, as shown in Figure \ref{fig:channel_gains}. Physically, one could say that the BS deployment is good, which was expected since the data set is from a realistic scenario from a major European city. Regarding the optimization problem point of view, assigning the users according to the MCG rule makes the problem considerably simpler, since it becomes purely convex (can be solved globally and efficiently) and the number of problem variables is reduced, since $z$ would be known in advance. However, there is no guarantee that using the MCG rule in a network where the cross-interference between BSs is large might be a good option. In such a situation, solving the MIGP is more appropriate.

    \item \textbf{OP \eqref{eq:Opt_Problem}+MIDACO}:
    as already discussed, the problem in its most natural format is highly non-convex. We solved it with MIDACO considering that the stop condition is \textit{true} when the optimal value converges and the initial condition is given by the lower bound of each variable. One may notice that the results obtained with both MIGP and (GP + fixed $z_{ij}$) are considerably better in terms of optimal value when compared to (OP \eqref{eq:Opt_Problem} + MIDACO). Therefore, the piecewise concave approximation followed by the variables transformation are indeed efficient mathematical tools to pose the problem in a more suitable format.

    \item \textbf{GP+$\text{fixed}$} \boldmath{$z_{ij},x_{ij}$}:
    fixing the working bandwidth $x$ of users considerably degrades the optimal value, since each one has specifics throughput requirements and channel gains. Furthermore, for $n \geq 150$ the problem becomes infeasible, therefore sharing such a resource equally between users is not a good choice and $x$ should remain as problem variable. Note that the approach of fixing $x$ is rather common in the literature \cite{Parida2014}.

\end{enumerate}

\subsection{Comparison between GP and sequential approaches}
\label{sec:seq}

Parallel to the previous analysis and comparison with global optimization methods, we also performed some numerical comparison between our approach and two ``classical" iterative solutions: i) a successive linearization approach and gradient descent (GD), and ii) a more sophisticated DC-programming approach. In both cases, to perform a fair comparison, we considered the case of fixed assignment, that is case 1) above, with fixed $z_{ij}$. 

For the GD case, a classical linearization of the function 
\[
\log_2\left(1+\frac{P_jg_{ij}}{\sigma^{2} + \sum_{k \neq j}P_k g_{ik}}\right)
\]
around the current feasible point is performed. 
For the second case, we used a difference-of-concave (DC) functions approximation of the form
\begin{equation*}
\begin{split}
& \log_2\left(1+\frac{P_jg_{ij}}{\sigma^{2} + \sum_{k \neq j}P_k g_{ik}}\right) \simeq \\
& \log_2\left(\sigma^{2}+\sum_{j=1}^N P_j g_{ij}\right) - h(\mathcal{P}(0)) - \nabla h(\mathcal{P}(0))^\top (\mathcal{P}\!-\!\mathcal{P}(0)), 
\end{split}
\end{equation*}
with $h(\mathcal{P}) = \log_2\left(\sigma^{2}+\sum_{k\neq j} P_j g_{ij}\right)$ and $\mathcal{P}(0)$ being the point around which each linearization is calculated.

\begin{table}[tb]
\begin{center}
\caption{Comparison between GP and sequential approaches.}
    \begin{tabular}{c|cc|cc} 
     \multicolumn{1}{c|}{} & \multicolumn{2}{c|}{$\mathbf{n = 30}$} & \multicolumn{2}{c}{$\mathbf{n = 40}$} \\
    \cline{2-5}
      Tech. & t(s) & $\sum_j P_j$ & t(s) & $\sum_j P_j$  \\
      \hline \hline
      $\text{GD}_1$ & $217.9$ & $0.2650$ & $266.7$ & $0.2650$\\
      $\text{GD}_2$ & $2230.5$ & $0.2651$ & $2677.5$ & $0.2651$\\
      $\text{DC}_1$ & $471.7$ & $1 \times 10^{-3}$ & $550.5$ & $1 \times 10^{-3}$ \\
      $\text{DC}_2$ & $4701.9$ & $6.52 \times 10^{-4}$ & $5436.8$ & $7.02 \times 10^{-4}$ \\
      \begin{tabular}[x]{@{}c@{}}GP + fixed\\$x_{ij}$, $z_{ij}$\end{tabular} & $6.2$ & $1.68 \times 10^{-4}$ & $8.3$ & $1.98 \times 10^{-4}$\\
      \begin{tabular}[x]{@{}c@{}}GP + fixed\\$z_{ij}$\end{tabular} & $6.1$ & $5.83 \times 10^{-5}$ & $8.2$ & $6.75 \times 10^{-5}$\\
      \hline
      \label{tab:comp}
    \end{tabular}
\end{center}
\end{table}

Due to the limitations of sequential approaches discussed in \ref{sub:iterative}, we set the constraint $||\mathcal{P}-\mathcal{P}(0)||_\infty \leq 10^{-4}$ for $\text{GD}_1$ and $\text{DC}_1$, and $||\mathcal{P}-\mathcal{P}(0)||_\infty \leq 10^{-5}$ for $\text{GD}_2$ and $\text{DC}_2$. The initial condition for the GD, DC, and GP approaches were determined as:
\begin{enumerate}
    \item each user is associated to the BS providing the highest channel gain (all techniques);
    \item the amount of bandwidth allocated to a user is proportional to the respective throughput demand, according to (GDs, DCs, GP + fixed $x_{ij}$, $z_{ij}$):
    \begin{equation*}
        x_{ij} = \frac{t_i}{\sum_{i=1}^n t_i}, \hspace{0.2cm} j\in[N]
    \end{equation*}
    \item $P_j = 0.08$ for all $j\in[N]$ (experimentally verified as a feasible initial condition- GDs and DCs).
\end{enumerate}

The obtained results are presented in Table \ref{tab:comp}. Note that our approaches (GP + fixed $z_{ij}$ and GP + fixed $x_{ij}, z_{ij}$) provide not only dramatically better optimal values, but also requires a substantially smaller runtime. Additionally, the scenarios have just 30 and 40 users because of the complexity in finding initial feasible resource allocation for larger cases, which is necessary for GDs and DCs.

\subsection{Scalability of MIGP across different network topologies}
\label{sec:scalability}

Finally, we test how MIGP performs across different network topologies, which offer variations on the number of BS, density of the deployment, number of users and propagation paths due to the geography of the city. We exemplify the scenarios through Figure \ref{fig:scalability_scenarios}, which shows downtown scenario $S_{3}$ (over same area as $S_{1}$ and $S_{2}$, but with more BS), river scenario $S_{4}$ and hill scenario $S_{5}$. 

Figure \ref{fig:BS} compares the optimal values obtained with MIGP and OP \eqref{eq:Opt_Problem}+MIDACO for the 4 network scenarios, i.e., distinct throughput requirements, channel gains, number of BS, and users. In all cases, the MIGP leads to better results.

\begin{figure}
    \centering
    \includegraphics[width=\columnwidth]{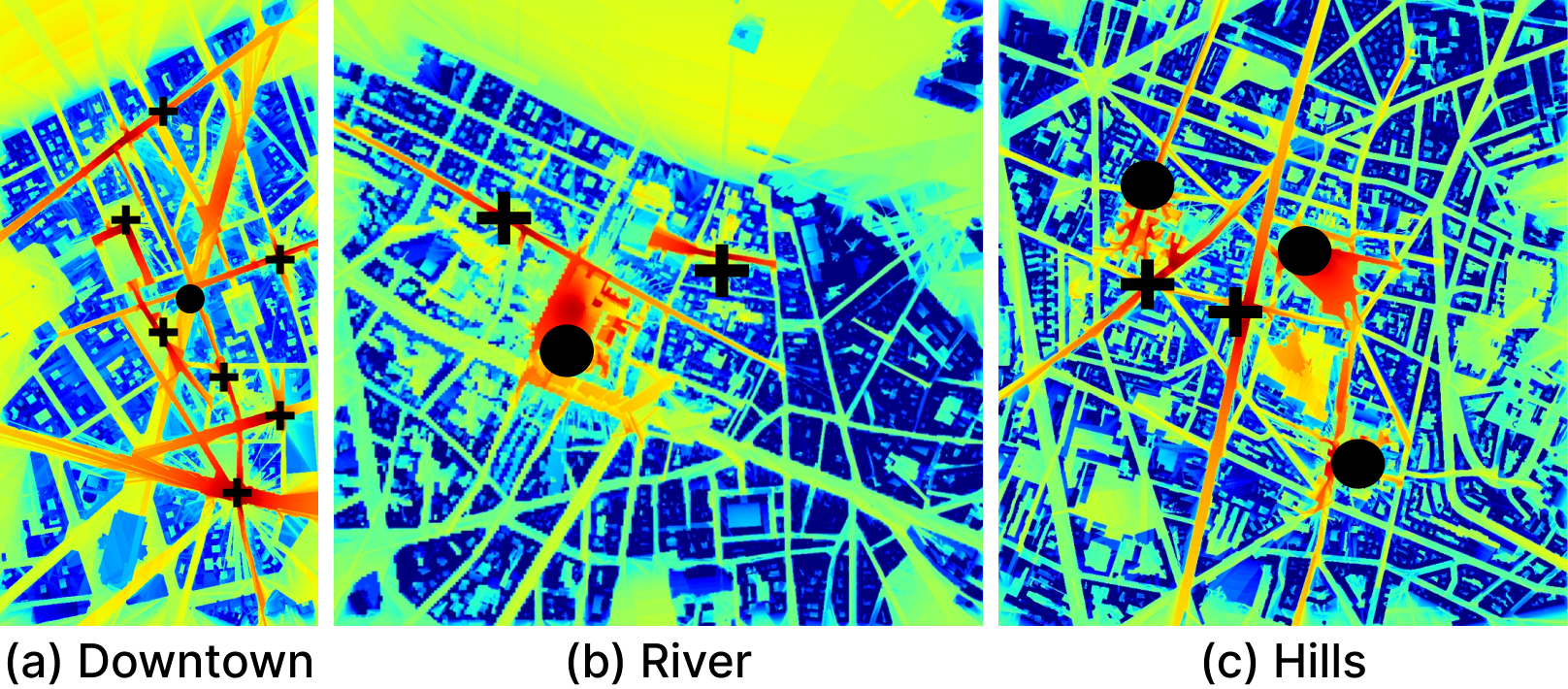}
    \vspace*{-16pt}
    \caption{Channel gains and antenna placements for (a) dense downtown deployment of scenario $S_{3}$, (b) the less dense river deployment of $S_{4}$ and (c) the hill deployment of $S_{5}$.}
    \label{fig:scalability_scenarios}
    \vspace*{-8pt}
\end{figure}

\begin{figure}
    \centering
    \includegraphics[width=0.9\columnwidth]{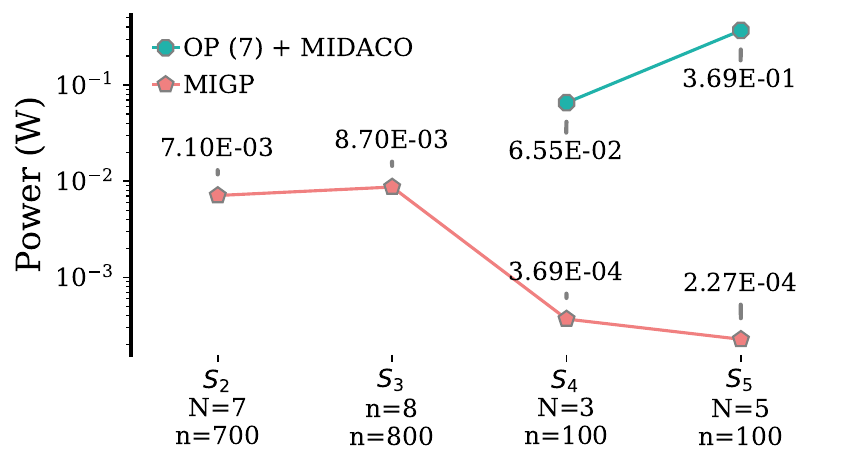}
    \vspace*{-8pt}
    \caption{Power consumption $\sum_{j=1}^N P_j$ for different networks.}
    \label{fig:BS}
    \vspace*{-8pt}
\end{figure}

The results above show that MIGP, contrary to OP \eqref{eq:Opt_Problem}+MIDACO, is able to provide feasible solutions even for scenarios with considerable numbers of BSs and users. Since the association variables are naturally binary, increasing the number of BSs is computationally more expensive than increasing the number of users. However, as previously explained, in environments with good BS deployment (not large cross interference), the MIGP generally associates a user with the BS providing the respective highest channel gain. When using simple association rules, the problem becomes purely convex and can be solved globally and efficiently.   

\subsection{The impact of $m$ on the optimal value and optimization runtime}
\label{sec:m_analysis}

As previously discussed in Remark \ref{rem1}, the number $m$ of concave power functions used in the PPF approximation has an impact both on the goodness of the approximation, and consequently on the obtained optimal value, and on the optimization problem complexity, and thus on the  solution runtime. Therefore, the choice of $m$ should obey a trade-off between accuracy and computational complexity. In particular, we run experiments varying the number of functions $m$, considering network scenarios with 50, 100, and 200 users. As presented in Figure \ref{fig:m_comp}, small values of $m$ lead to solutions with lower execution time, however higher values for the objective function. Conversely, for values of $m$ greater than 15, the time to compute a solution increases without yielding significant improvement on the BSs transmission powers.

Analyzing the optimal values plots of Figure \ref{fig:m_comp}, we observe that the value $m=5$ may be considered a good trade-off, since it represents the point where the curve changes its slope. More generally, we suggested choosing values of $m$  in the interval $5$-$10$.

\begin{figure*}
    \centering
    \includegraphics[width=1\textwidth]{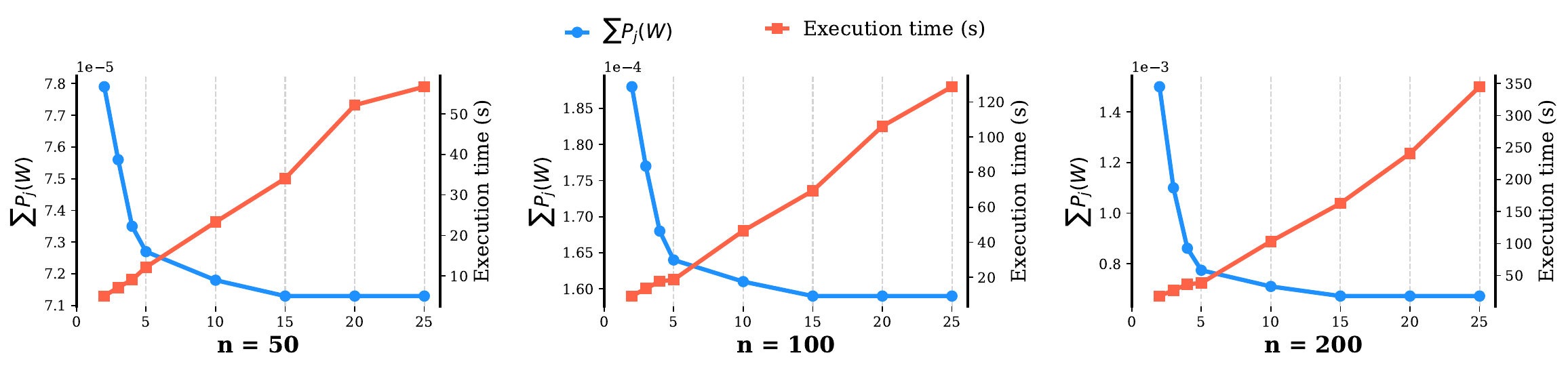}
    \vspace*{-8pt}
    \caption{Optimal cost $\sum_{j=1}^N P_j$ and execution time for different values of $m$.}
    \label{fig:m_comp}
    \vspace*{-8pt}
\end{figure*}

\section{Conclusion}
\label{sec:C}

This research paper introduces an innovative approach to address the challenge of minimizing transmission power in OFDMA heterogeneous networks while ensuring individual users' throughput requirements are met. The proposed method is formulated as a mixed integer geometric program, leveraging the Big-M method to handle the problem's combinatorial nature. Additionally, a piecewise power function approximation of the Shannon-Hartley Theorem is employed, followed by variable transformation to tackle the problem's non-convex characteristics effectively. The optimization problem involves determining the optimal users' working bandwidth, base station (BS) transmission powers, and users' associations. Remarkably, unlike many existing methods, this approach does not rely on successive or iterative algorithms and does not necessitate prior knowledge of a feasible initial condition for the problem. Moreover, it is shown that the problem becomes convex when simple users' association rules are employed.
The effectiveness of the proposed optimization problem is demonstrated through extensive testing in a highly realistic scenario with a considerable number of users. Channel gains are calculated using a high-performance propagation solver, ensuring a practical evaluation.
Looking ahead, we plan to extend our work to include the temporal aspect of the network, accounting for user mobility, varying channel gains, positions, and transmission power over time. In such dynamic scenarios, the challenge of users' handovers will be a critical consideration to address. Additionally, the scenario where the users are served by more than one BS also relies on the SINR calculation, which we converted to the \textit{log-sum-exp} convex formulation. Therefore, we plan to extend our proposed approach to those cases as well. Overall, this paper presents a promising optimization approach for enhancing the energy efficiency of heterogeneous networks while accommodating the increasing demands of connected devices, paving the way for greener and more efficient cellular networks in the future. We plan to share our code, so the MIGP can be easily compared to other approaches in the literature.



 
%

\bibliographystyle{IEEEtran}
\bibliography{JSAC}

\section{Biography Section}

\begin{IEEEbiography}[{\includegraphics[width=1.0in,height=1.25in,clip,keepaspectratio]{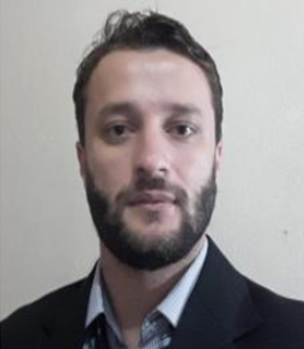}}]{Gabriel O. Ferreira}(Member, IEEE) received the B.S. degree in mechatronics engineering from the Federal Center for Technological Education of Minas Gerais, Brazil, in 2018 and his M.Sc. degree in Electrical Engineering from the Federal University of São João del-Rei, Brazil, in 2020. 

He is currently pursuing the Ph.D. degree with the Department of Electronics and Telecommunication, Politecnico di Torino, Italy. His research interests include modeling and forecasting mobile network traffic, network optimization, and network closed-loop control. Mr Ferreira has received the Marie Skłodowska-Curie Actions-Innovative Training Networks Fellowship.
\end{IEEEbiography}

\begin{IEEEbiography}[{\includegraphics[width=1.0in,height=1.25in,clip,keepaspectratio]{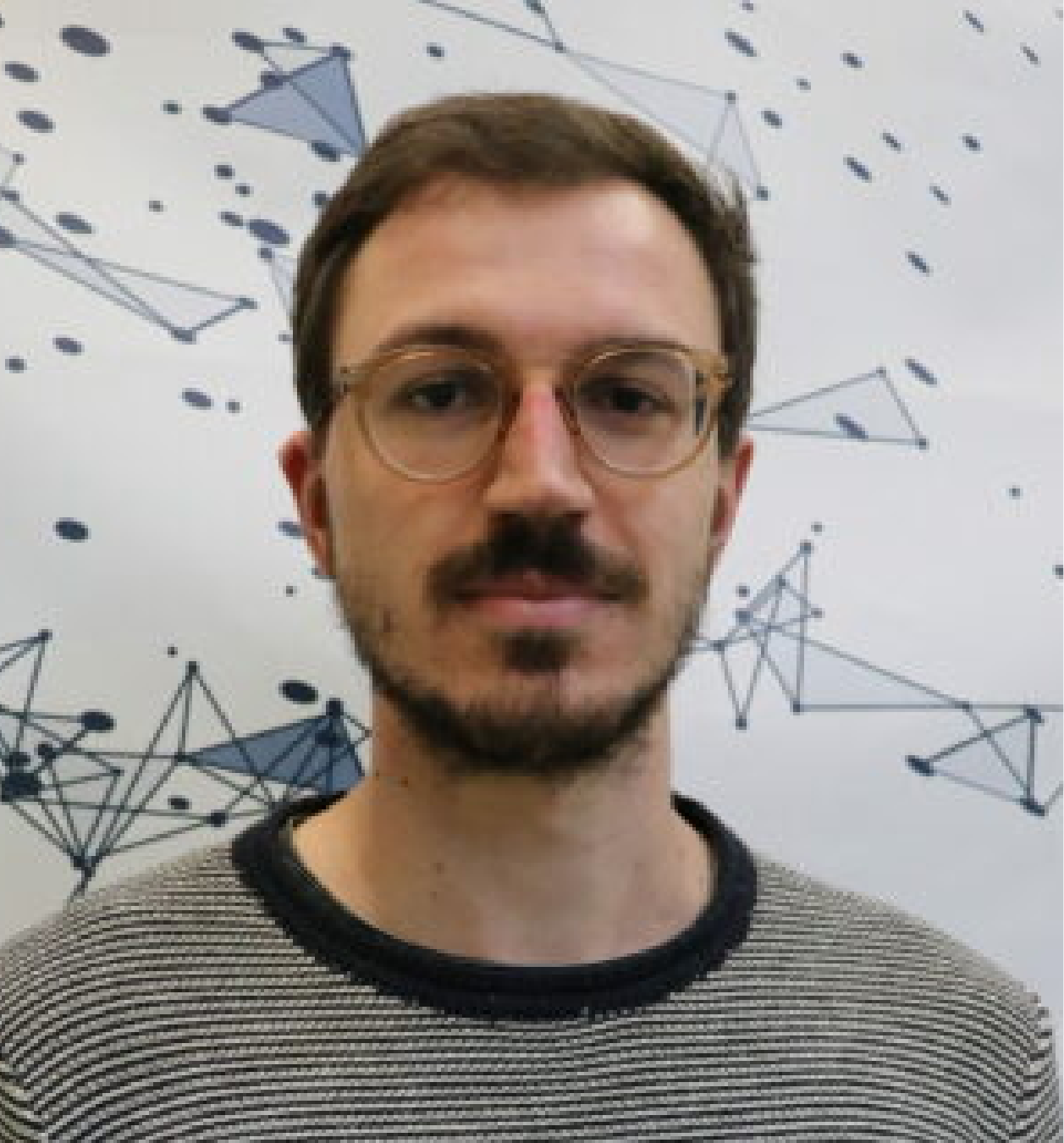}}]{Andre Felipe Zanella} (Member, IEEE) is a PhD researcher at IMDEA Networks Institute and UC3M. He received his bachelor's and master's degrees in Telecommunications Engineering at the Federal University of Parana, in Brazil. He's currently a part of the Networks Data Science group, working on mobile traffic analysis and remote sensing with network metadata, where his interests lay in developing techniques that help solving social sciences problems using information gathered by network operators.
\end{IEEEbiography}

\begin{IEEEbiography}[{\includegraphics[width=1.0in,height=1.25in,clip,keepaspectratio]{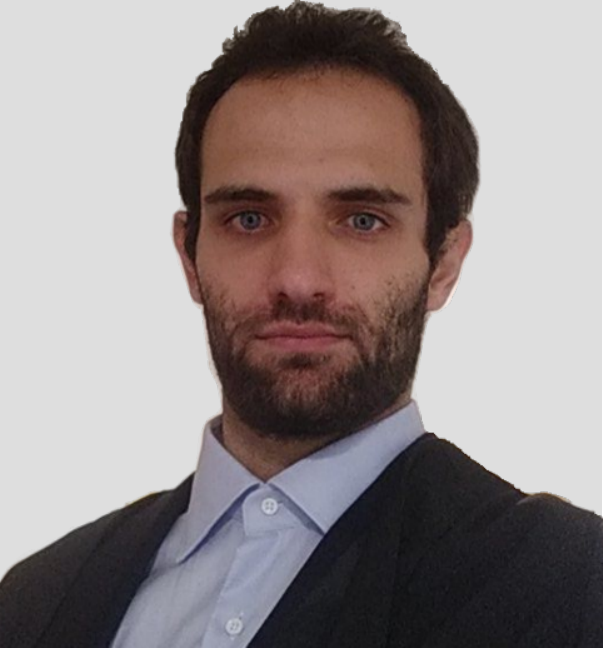}}]{Stefanos Bakirtzis}(Member, IEEE) received the diploma degree in electrical and computer engineering from the National Technical University of Athens, Athens, Greece, in 2017, and the M.A.Sc. degree in electrical and computer engineering from the University of Toronto, Toronto, ON, Canada, in 2020. He is currently pursuing the Ph.D. degree with the Department of Computer Science and Technology, University of Cambridge, Cambridge, U.K.,He is currently working on the Big Data Analytics for Radio Access Networks (BANYAN) project as a member of Ranplan Wireless with the University of Cambridge. His research interests include wireless communication systems, network optimization, wireless channel modeling, machine learning and artificial intelligence, computational modeling, and stochastic uncertainty quantification. Mr. Bakirtzis has received the Onassis Foundation Scholarship, the Foundation for Education and European Culture Grant, and the Marie Skłodowska-Curie Actions-Innovative Training Networks Fellowship.
\end{IEEEbiography}

\begin{IEEEbiography}[{\includegraphics[width=1in,height=1.25in,clip,keepaspectratio]{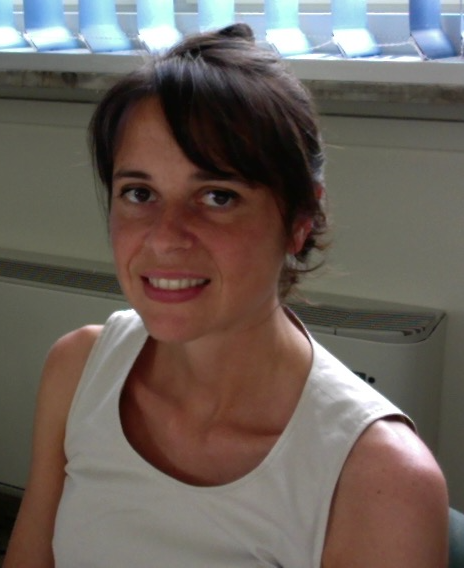}}]{Chiara Ravazzi}(M'13) is a senior researcher at the Italian National Research Council (CNR-IEIIT) and adjunct professor at the Politecnico di Torino. She obtained the Ph.D. in Mathematical Engineering from Politecnico di Torino in 2011. In 2010, she spent a semester as a visiting scholar at the Massachusetts Institute of Technology (LIDS), and from 2011 to 2016, she worked as a post-doctoral researcher at Politecnico di Torino (DISMA, DET). She joined the Institute of Electronics and Information Engineering and Telecommunications (IEIIT) of the National Research Council (CNR) in the role of a tenured researcher (2017-2022). Furthermore, she served as an Associate Editor for IEEE Transactions on Signal Processing from 2019 to 2023, and she currently holds the same position for IEEE Transactions on Control Systems Letters (since 2021) and the European Journal of Control (since 2023). She has achieved the national scientific qualification as a second-tier (associate professor) in the fields of Automatica (09/G1) and Telecommunications (09/F2).
\end{IEEEbiography}

\begin{IEEEbiography}[{\includegraphics[width=1in,height=1.25in,clip,keepaspectratio]{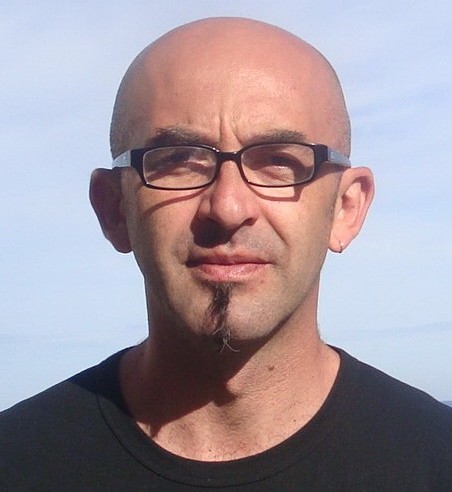}}]{Fabrizio Dabbene} (Senior Member, IEEE) received the Laurea and the Ph.D. degrees from Politecnico di Torino, Italy, in 1995 and 1999, respectively.

He is a Director of Research with the institute IEIIT of the National Research Council of Italy (CNR), Milan, Italy, where he is the coordinator of the Information and Systems Engineering Group. He has held visiting and research positions with The University of Iowa, Iowa City, IA, USA, Penn State University, University Park, PA,
USA, and with the Russian Academy of Sciences, Institute of Control Science, Moscow, Russia. He has authored or coauthored more than 100 research papers and two books.

Dr. Dabbene served as an Associate Editor for Automatica during 2008–2014 and for the IEEE T{\scriptsize RANSACTIONS ON}  A{\scriptsize UTOMATIC} C{\scriptsize ONTROL} during 2008–2012, and he is currently the Senior Editor of the IEEE C{\scriptsize ONTROL} S{\scriptsize YSTEMS} S{\scriptsize OCIETY} L{\scriptsize ETTERS}. He was Elected Member of the Board of Governors during 2014–2016 and he served as the VicePresident for Publications during 2015–2016. He is currently chairing the IEEE-CSS Italy Chapter.
\end{IEEEbiography}

\begin{IEEEbiography}[{\includegraphics[width=1in,height=1.25in,clip,keepaspectratio]{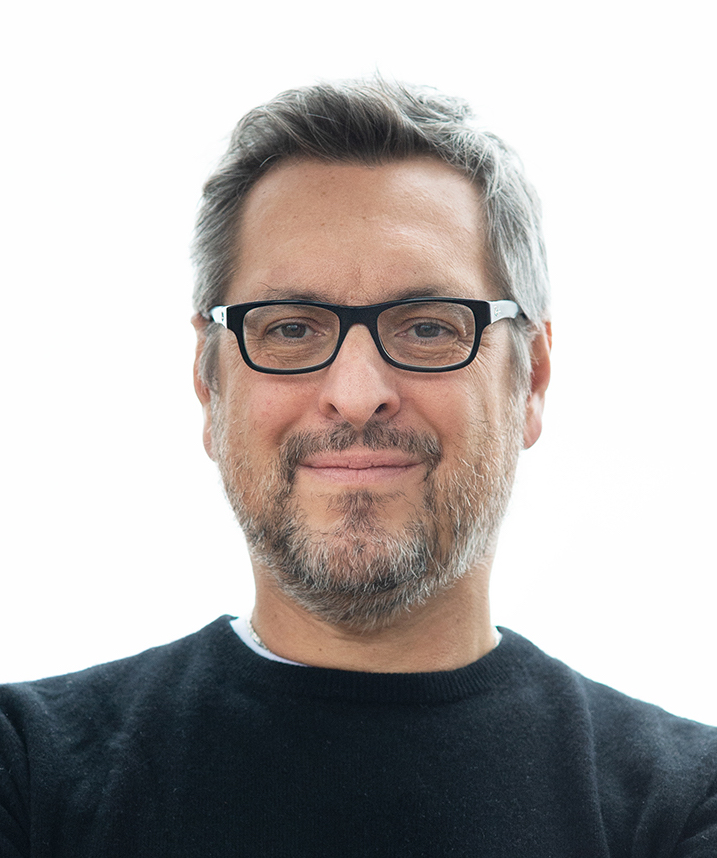}}]{Giuseppe C. Calafiore} (Fellow, IEEE) is a full professor at DET, Politecnico di Torino, where he coordinates the Control Systems and Data Science group. He is also a full professor at the College of Mechanical Engineering at VinUniversity in Hanoi, Vietnam, and an associate fellow of the IEIIT-CNR, Italy. Dr. Calafiore held visiting positions at the Information Systems Laboratory (ISL), Stanford University, California, in 1995; at the Ecole Nationale Sup\'erieure de Techniques Avanc\'ees (ENSTA), Paris, in 1998; and at the University of California at Berkeley, in 1999, 2003, 2007, 2017, 2018 and 2019, and 2021 where he lately taught a Master course on Financial Data Science. He was a Senior Fellow at the Institute of Pure and Applied Mathematics (IPAM), University of California at Los Angeles, in 2010. Dr. Calafiore is the author of about 210 journal and conference proceedings papers, and of eight books. He is a Fellow of the IEEE. He received the IEEE Control System Society “George S. Axelby” Outstanding Paper Award in 2008. His research interests are in the fields of convex optimization, identification and control of uncertain systems, with applications ranging from finance and economic systems to robust control, machine learning, and data science. Dr. Calafiore has over twenty years of teaching experience in Master-level and Ph.D. courses in the areas of Systems and Control Theory, Convex Optimization and Machine Learning.
\end{IEEEbiography}

\begin{IEEEbiography}[{\includegraphics[width=1in,height=1.25in,clip,keepaspectratio]{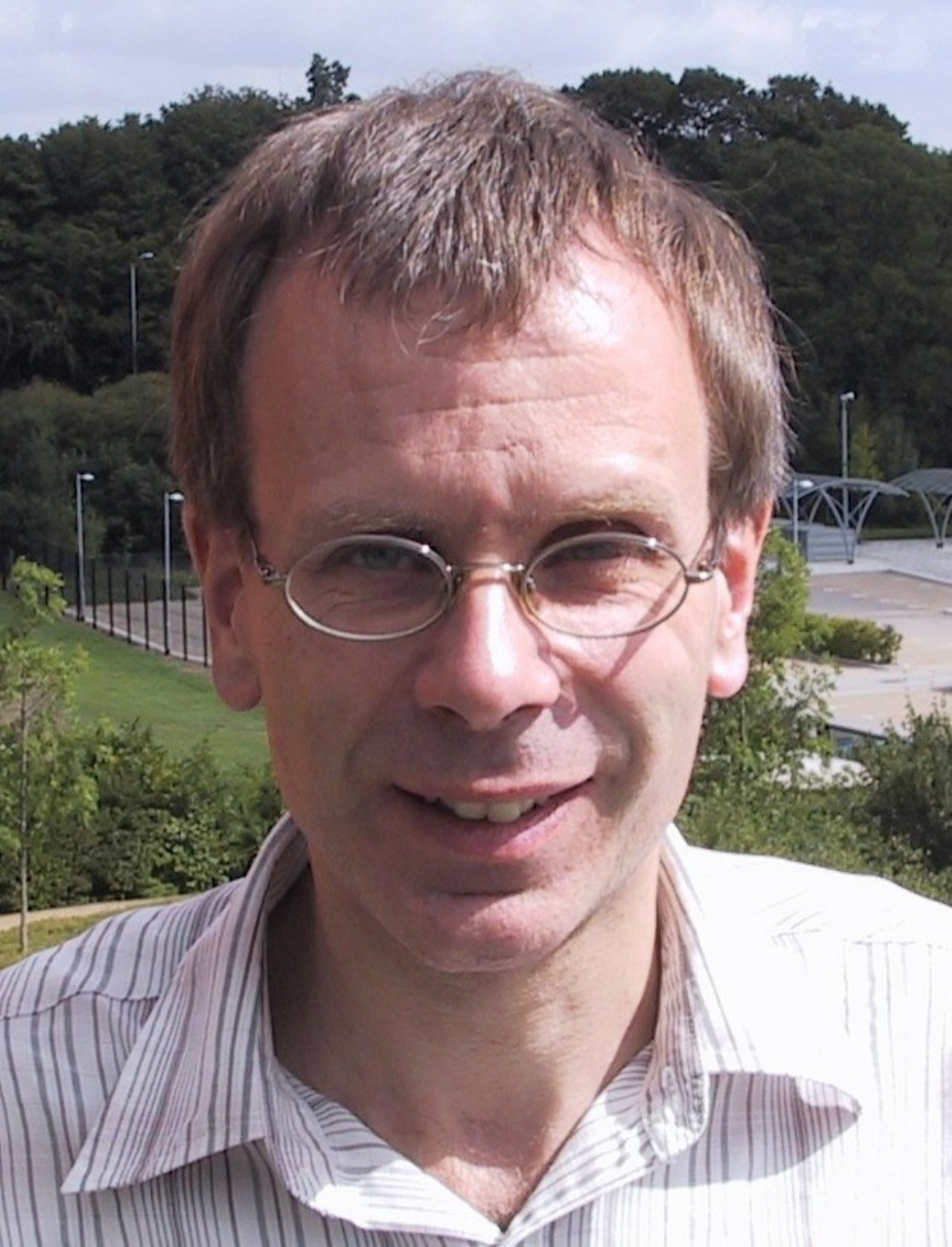}}]{Ian Wassell} (Member, IEEE) received the B.Sc. and B.Eng. degrees from the University of Loughborough, Loughborough, U.K., in 1983, and the Ph.D. degree from
the University of Southampton, Southampton, U.K., in 1990. He is currently a University Associate Professor with the Computer Laboratory, University of Cambridge, Cambridge, U.K., and has experience
in excess of 25 years in simulation and design of radio communication systems gained via a number
of positions in industry and higher education. He has authored more than 200 articles. His current research interests include wireless sensor networks, cooperative wireless networks, propagation modeling, sparse representation, and machine learning. Dr. Wassell is a member of the IET and a Chartered Engineer.
\end{IEEEbiography}

\begin{IEEEbiography}[{\includegraphics[width=1in,height=1.25in,clip,keepaspectratio]{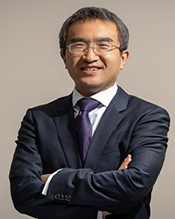}}]{Jie Zhang} (Senior Member, IEEE) has held the Chair in Wireless Systems at the Department of Electronic and Electrical Engineering, University of Sheffield, Sheffield, U.K., on a part-time basis, since January 2011. He is also the Founder, a Board Director and the Chief Scientific Officer of Ranplan Wireless, Cambridge, U.K., a public company listed on Nasdaq First North. Ranplan Wireless produces a suite of world leading indoor and the first joint indoor-outdoor radio access network planning tool -- Ranplan Professional, which is being used by all the top 5 telecom equipment vendors and the world’s largest mobile operators, system integrators and research organizations. Along with his students and colleagues, he has pioneered research in small cell and heterogeneous network and published some of the landmark papers and book on these topics, widely used by both academia and industry. Since 2010, he and his team have also developed ground-breaking work in smart radio environment and building wireless performance modelling, evaluation and optimization, the key concepts of which were introduced in a paper titled “Fundamental Wireless Performance of a Building”, IEEE Wireless Communications, 29(1), 2022. His Google scholar citations are in excess of 8500 with an H-index of 40. He received a Ph.D. degree in industrial automation from East China University of Science and Technology, Shanghai, China, in 1995. Prior to joining the University of Sheffield, he had studied/worked with Imperial College London, Oxford University, and University of Bedfordshire, reaching a status of Senior Lecturer, Reader and Professor in 2002, 2005 and 2006, respectively. 

\end{IEEEbiography}

\begin{IEEEbiography}[{\includegraphics[width=1in,height=1.25in,clip,keepaspectratio]{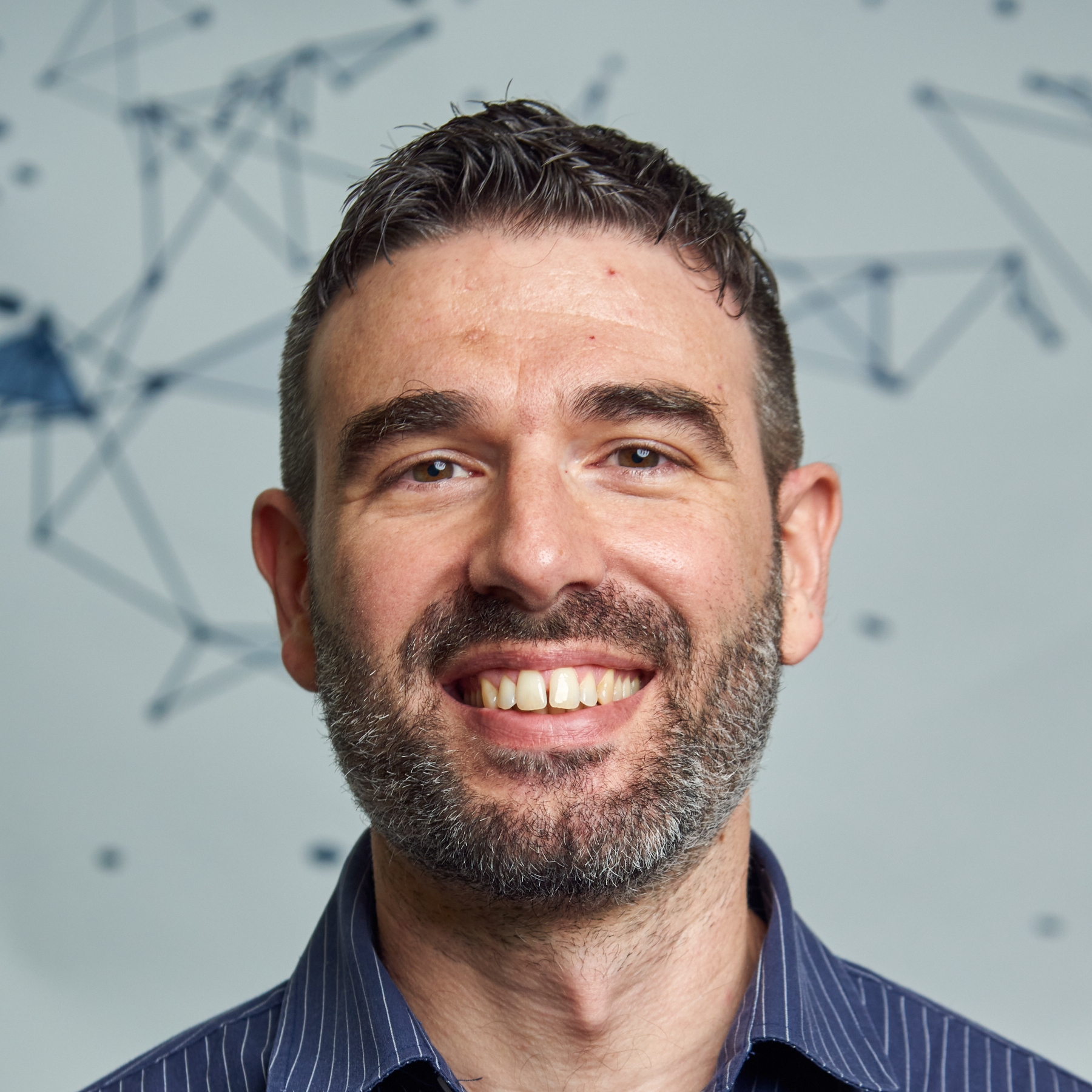}}]{Marco Fiore} (Senior Member, IEEE) is Research Associate Professor at IMDEA Networks Institute and CTO at Net AI Tech Ltd. 

He received M.Sc. degrees from University of Illinois at Chicago, IL, USA (2003), and Politecnico di Torino, Italy (2004), a Ph.D. degree from Politecnico di Torino (2008), Italy, and a Habilitation a Diriger des Recherches (HDR) from Universite de Lyon, France (2014). He held tenured positions as Maitre de Conferences (Associate Professor) at Institut National des Sciences Appliquees (INSA) de Lyon, France (2009-2013), and Researcher at Consiglio Nazionale delle Ricerche (CNR), Italy (2013-2019). He has been a visiting researcher at Rice University, TX, USA (2006-2007), Universitat Politecnica de Catalunya (UPC), Spain (2008), and University College London (UCL), UK (2016-2018). 

Dr. Fiore is a senior member of IEEE, and a member of ACM. He was a recipient of a European Union Marie Curie fellowship and a Royal Society International Exchange fellowship. He leads the Networks Data Science group at IMDEA Networks Institute, which focuses on research at the interface of computer networks, data analysis and machine learning.
\end{IEEEbiography}
\vspace{11pt}

\vfill

\end{document}